\newcommand{\bC}{\mathbb{C}}
\newcommand{\bQ}{\mathbb{Q}}
\newcommand{\bZ}{\mathbb{Z}}
\newcommand{\bR}{\mathbb{R}}
\newcommand{\bD}{\mathbb{D}}
\newcommand{\la}{\langle}
\newcommand{\ra}{\rangle}
\newcommand{\bP}{\mathbb{P}}
\newcommand{\cY}{\mathcal{Y}}
\newcommand{\cO}{\mathcal{O}}
\newcommand{\cM}{\mathcal{M}}
\newcommand{\moduli}{\overline{\mathcal{M}}}
\newcommand{\cF}{\mathcal{F}}
\newcommand{\cE}{\mathcal{E}}
\newcommand{\cZ}{\mathcal{Z}}
\newcommand{\cC}{\mathcal{C}}
\newcommand{\lSFT}{\mathrm{lSFT}}
\newcommand{\CL}{\mathcal{L}}
\newcommand{\LCZ}{\mathrm{LCZ}}
\newcommand{\rd}{\mathrm{d}}
\newcommand{\Id}{\mathrm{id}}
\newcommand{\frg}{{\mathfrak{g}}}
\newcommand{\age}{{\mathrm{age}}}
\newcommand{\mld}{\mathrm{mld}}
\newcommand{\vdim}{\mathrm{vdim}}
\newcommand{\Mor}{\mathrm{Mor}}
\newcommand{\Aut}{\mathrm{Aut}}
\newcommand{\SFT}{\mathrm{SFT}}
\newcommand{\ev}{\mathrm{ev}}
\newtheorem{thm}{Theorem}[section]
\newtheorem{prop}[thm]{Proposition}
\newtheorem{example}[thm]{Example}
\newtheorem{cor}[thm]{Corollary}
\newtheorem{rem}[thm]{Remark}
\newtheorem{conj}[thm]{Conjecture}
\newtheorem{exmp}[thm]{Example}
\newtheorem{lem}[thm]{Lemma}
\newtheorem{defn-prop}[thm]{Definition-Proposition}
\newcommand{\Addresses}{{
		\bigskip
		\footnotesize

  	Chi Li, \par\nopagebreak
        \textsc{Department of Mathematics, Rutgers University, Piscataway, NJ, U.S., 08854-8019.}\par\nopagebreak
         \textit{E-mail address:} \href{mailto:chi.li@rutgers.edu}{chi.li@rutgers.edu}

         \medskip

	    Zhengyi Zhou, \par\nopagebreak
	    \textsc{Morningside Center of Mathematics, Chinese Academy of Sciences;}\par\nopagebreak
         \textsc{Academy of Mathematics and Systems Science, Chinese Academy of Sciences, China, 100190}\par\nopagebreak
		\textit{E-mail address}: \href{mailto:zhyzhou@amss.ac.cn}{zhyzhou@amss.ac.cn}

}}
\begin{document}

\title{Minimal log discrepancy and orbifold curves}
\author{Chi Li, Zhengyi Zhou }

\dedicatory{Dedicated to Professor Gang Tian for his 65th birthday}

\date{}

\begin{abstract}
We show that the minimal log discrepancy of any isolated Fano cone singularity is at most the dimension of the variety. This is based on its relation with dimensions of moduli spaces of orbifold rational curves. We also propose a conjectural characterization of weighted projective spaces as Fano orbifolds in terms of orbifold rational curves, which would imply the equality holds only for smooth points. 
\end{abstract}

\maketitle

%\tableofcontents

\section{Introduction and main result}

In birational algebraic geometry, in particular, in the study of Minimal Model Program (MMP), the minimal log discrepancy (mld) is an important invariant for Kawamata log terminal (klt) singularities (see \cite{Amb06,KM98}). Indeed, Shokurov showed that two conjectural properties of mld would imply the termination of flips along MMP. One of the conjectures says that the mld of closed points is lower semicontinuous when the point moves on a fixed normal variety X of dimension $n$. Since the mld of smooth point is equal to $n$, this in particular implies that $\mld(x, X)\le n$ (see \cite[Conjecture 3.2]{Amb06}). In this short essay, we will prove that this sharp upper bound is indeed true for any isolated Fano cone singularity.
\begin{thm}\label{thm-main}
Let $o\in X$ be an isolated Fano cone singularity of dimension $n$. Then $\mld(o, X)\le n$. 
\end{thm}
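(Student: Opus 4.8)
The plan is to translate $\mld(o,X)$ into the geometry of the Fano orbifold obtained by collapsing the Reeb flow, and then to bound the resulting invariant by producing orbifold rational curves. First I would fix a quasi-regular Reeb vector field $\xi$ in the Reeb cone (these are dense), so that $X\setminus\{o\}\to\cY$ is a Seifert $\bC^{*}$-bundle over a klt Fano orbifold $\cY$ of dimension $n-1$, with $-K_\cY\sim_\bQ r L$ for the polarizing orbi-line bundle $L$. Since $\mld(o,X)$ is intrinsic to the singularity, I am free to use any such $\cY$, and to prove the inequality it suffices to exhibit a single divisorial valuation over $X$ with center $o$ and log discrepancy at most $n$. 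The naive candidate, the canonical valuation $v_\xi$ of the cone, is not enough: its log discrepancy equals $r$, which for weighted cones can exceed $n$ (already for $X=\bC^n$ with a nontrivial weighted action, where $o$ is a smooth point). So the decisive input must come from the transverse geometry of $\cY$.

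Second, and this is the heart of the argument, I would set up a dictionary between valuations over $X$ centered at $o$ and orbifold rational curves on $\cY$ through a general (non-stacky) point $y$. Concretely, a curve $f\colon\bP^1\to\cY$ with $f(0)=y$, together with the $\bC^{*}$-direction of the cone, should determine a distinguished divisorial valuation $E_f$ over $X$ supported over $o$, and the key point is that $A_X(E_f)$ is controlled from above by the anticanonical degree $-K_\cY\cdot f_*[\bP^1]$, modified by the ages of the isotropy groups met by $f$. I expect this to be organized through the moduli space $\moduli$ of orbifold stable maps with one marked point constrained to $y$: its virtual dimension, in the usual normalization of the form $\vdim\moduli=-K_\cY\cdot f_*[\bP^1]+(n-1)-2-\sum\age$, together with the evaluation map $\ev$ at $y$, ties the count directly to $A_X(E_f)$. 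On the symplectic side this mirrors the correspondence between the minimal log discrepancy of the contact link and the minimal $\RS$/$\CZ$ index of Reeb orbits bounding holomorphic planes, so the moduli-theoretic and valuation-theoretic bookkeeping should match.

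Third, I would produce the curves and extract the numerical bound. A klt Fano orbifold is rationally connected, so through the general point $y$ there is a rational curve; by the orbifold analogue of bend-and-break one may take a minimal such curve $C$ with $-K_\cY\cdot C\le\dim\cY+1=n$, and minimality keeps the age correction from spoiling the estimate. Feeding $C$ into the dictionary yields a valuation $E_C$ over $o$ with $A_X(E_C)\le -K_\cY\cdot C\le n$, whence $\mld(o,X)\le A_X(E_C)\le n$.

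The main obstacle is exactly the dictionary of the second step: making rigorous the passage from an orbifold rational curve to a divisorial valuation over $X$ with the asserted bound on $A_X$, with the correct treatment of the stacky structure (the $\age$ contributions), the verification that the constructed valuation genuinely has center $o$ rather than a larger subvariety, and the promotion of the virtual dimension count to an honest inequality for log discrepancies. This is where the interplay between orbifold Gromov–Witten moduli and the $\SFT$-type index computations does the real work. Finally, tracking when equality $\mld(o,X)=n$ can hold should force the space of curves through $y$ to be as large as possible, which, by an orbifold analogue of the Kobayashi–Ochiai / Cho–Miyaoka–Shepherd-Barron characterization, should identify $\cY$ with a weighted projective space and hence $o$ with a smooth point, in line with the conjecture advertised in the abstract.
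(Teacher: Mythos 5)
Your overall skeleton—pass to the quasi-regular quotient $\cY$, tie $\mld(o,X)$ to orbifold rational curves on $\cY$, and invoke orbifold bend-and-break for a degree bound of the form $\le \dim\cY+1=n$—matches the paper's. But the step you yourself call the ``heart of the argument,'' namely a dictionary assigning to each orbifold rational curve $f$ a divisorial valuation $E_f$ over $o$ with $A_X(E_f)$ bounded by the age-corrected anticanonical degree, is never constructed in your proposal (you flag it as the main obstacle), and it is precisely the step the paper does \emph{not} perform. No valuation is ever built from a curve. Instead, the paper starts from the exact formula of \cite[Proposition 2.12]{LZ24}: on the extraction $X'$, the total space of $\CL^{-1}$, all divisors computing $\mld(o,X)$ are toric valuations over the quotient-singularity strata of the zero section, giving $\mld(o,X)=\min_{g\in\pi_0(I\cY)}\{\,r(m-w_1(g))/m+\age(g^{-1})\,\}$ with no reference to curves. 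Curves enter only through a lower bound in the opposite direction: if $f:\bP^1(1,\ell)\to\cY$ carries its orbifold marked point to $\cY_g$, then $f^*\CL^{-1}=\cO(k)$ with $k<0$ and $k\equiv w_1(g)\ell/m \bmod \ell$, which forces $-K_\cY\cdot f_*\cC\ge r(m-w_1(g))/m$ and hence $d_{g^{-1}}(f)=-K_\cY\cdot f_*\cC+\age(g^{-1})\ge \mld(o,X)$. So each twisted curve bounds the mld from above not by producing a valuation, but by dominating one explicit term of an already-known formula. Your proposal, resting on an unconstructed correspondence, has a genuine gap exactly at its center; and it is not clear such a correspondence exists in the form you state.

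Two further points would fail even granting the dictionary. First, your claim that ``minimality keeps the age correction from spoiling the estimate,'' yielding $A_X(E_C)\le -K_\cY\cdot C\le n$, inverts the actual bookkeeping: in the paper the age term is part of what bend-and-break bounds, since the Riemann--Roch estimate for twisted curves gives $\dim_{[f]}\Mor_{(1,g)}(\bP^1(1,\ell),\cY;f|_{\{0,\infty\}})\ge -K_\cY\cdot f_*\cC+\age(g^{-1})-\dim\cY$, so breaking terminates with $-K_\cY\cdot f_*\cC+\age(g^{-1})\le n$; the age is absorbed into the index, not discarded by minimality. Second, curves constrained only at a single general non-stacky point, as in your setup, cannot detect the stacky terms $r(m-w_1(g))/m+\age(g^{-1})$ of the mld formula at all: the fractional degree bound above requires an orbifold marked point mapped to $\cY_g$ with prescribed monodromy, which is why the paper works with $\bP^1(1,\ell)$ with two marked points, $0\mapsto\cY_1$ and $\infty\mapsto\cY_g$.
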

\begin{rem}
By \cite[Theorem 1.1]{McL16}, we find that more generally if the link of an isolated singularity is contactomorphic to the link of an isolated Fano cone singularity, then the same inequality is also true. 
\end{rem}

Note that when the Fano cone is an ordinary cone over a smooth Fano manifold, the upper bound is equivalent to the well-known fact that the Fano index of a Fano manifold is at most the dimension plus one. This fact can be proved by using either the Riemannn-Roch theorem or Mori's bend-and-break theory of rational curves. Since the Riemann-Roch approach does not seem to work well in the orbifold setting, our proof uses Mori's theory in the orbifold setting and shows its connection with minimal log discrepancy invariants. This connection is motivated by our previous work \cite{LZ24} which in particular expresses the mld invariant in terms of certain symplectic invariants that arises essentially in the study of moduli space of pseudo-holomorphic curves. 
In this paper we will show that the mld invariant is bounded from above by dimensions of certain moduli space of orbifold rational curves with domain a weighted projective line (see \eqref{ineq-mld}). We will give two proofs for this crucial inequality, one using purely algebraic geometry and the other from a symplectic perspective. 
The argument for the above result also leads us to propose a characterization of weight projective spaces by using orbifold rational curves in the same spirit of Mori-Mukai (see Conjecture \ref{conj-WP}), which would imply that the equality case only happens for smooth points. As evidence, we prove an orbifold version of Mori's theorem:

\begin{thm}\label{thm-main2}
    Let $\cY$ be a complex orbifold whose orbifold tangent bundle $T \cY$ is ample. Then $\cY$ is a finite quotient of a weighted projective space. 
\end{thm}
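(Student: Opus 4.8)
The plan is to transport Mori's original proof of the Hartshorne conjecture, sharpened by the Cho--Miyaoka--Shepherd-Barron (CMSB) characterization of projective space, into the orbifold category. The first step is to record that ampleness of $T\cY$ gives ampleness of $-K_\cY = \det T\cY$, so $\cY$ is a projective Fano orbifold. By the orbifold version of Mori's bend-and-break, $\cY$ is then covered by orbifold rational curves whose domains are weighted projective lines --- precisely the objects at the center of this paper --- and bend-and-break simultaneously bounds the orbifold anticanonical degree of a \emph{minimal} such curve from above.

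The crucial step is to pin down the splitting type of $T\cY$ along a minimal rational curve. For any non-constant orbifold morphism $f\colon \cC \to \cY$ with $\cC$ a weighted projective line, the bundle $f^*T\cY$ is ample (being the pullback of an ample orbifold bundle under a finite map to its image), hence splits as a direct sum of $n = \dim\cY$ orbifold line bundles of strictly positive degree. Since the differential $\rd f\colon T\cC \to f^*T\cY$ is a nonzero sheaf map, the summand receiving the tangent direction has degree at least $\deg T\cC$. Matching this lower bound on the anticanonical degree against the bend-and-break upper bound forces the splitting type to be the maximal one, $f^*T\cY \cong \cO(2)\oplus\cO(1)^{\oplus(n-1)}$ up to orbifold twists supported at the marked points, so that the minimal curves behave exactly like the lines of a weighted projective space.

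I would then run the variety-of-minimal-rational-tangents argument. Fixing a general point $x\in\cY$, the above splitting makes the minimal orbifold rational curves through $x$ very free, so the tangent map to the projectivized orbifold tangent space $\bP(T_x\cY)$ is dominant and, by the degree bound together with a deformation-rigidity argument, generically finite and unramified. Showing this tangent map is an isomorphism and that the minimal curves sweep out all of $\bP(T_x\cY)$ identifies a weighted projective space structure; passing to the orbifold universal cover to absorb the orbifold fundamental group then yields that $\cY$ is a finite quotient of a weighted projective space, as claimed (note that such finite quotients themselves have ample orbifold tangent bundle, so they must appear in the conclusion).

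The hard part will be carrying bend-and-break and the CMSB rigidity argument faithfully into the orbifold world. Producing enough orbifold rational curves requires either adapting the characteristic-$p$ Frobenius construction underlying Mori's theorem to stacks or, alternatively, feeding in the symplectic moduli-of-curves input developed elsewhere in this paper. One must also control the orbifold marked points that can occur on the minimal curves --- it is exactly their presence that replaces ordinary projective space by a \emph{weighted} one --- and verify that the tangent and evaluation maps remain isomorphisms despite these marked points. The bookkeeping for the fractional orbifold degree corrections and the final identification of the global quotient datum are where I expect the genuine difficulty to lie.
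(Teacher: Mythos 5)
Your proposal has the same skeleton as the paper's proof --- which is Mori's original method (bend-and-break for existence plus a degree bound, Grothendieck-type splitting of $f^*T\cY$ along a minimal curve, then a tangent-map argument), not the CMSB machinery, which the paper deliberately avoids and leaves to its Conjecture 2.10. But your central step contains a genuine gap: you anchor the family of minimal curves at a \emph{general} point $x\in\cY$ and claim the splitting makes them very free, so that the tangent map to $\bP(T_x\cY)$ is dominant. In the orbifold setting this is false. The minimality bound is on $d_{g^{-1}}(f)=-K_\cY\cdot f_*\cC+\age(g^{-1})\le n+1$, so the honest anticanonical degree $-K_\cY\cdot f_*\cC$ --- which is what governs the dimension of deformations through a fixed smooth point --- can be as small as $n+1-\age(g^{-1})$, and the minimal curves are anchored to the orbifold locus rather than sweeping out tangent directions at $x$. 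Concretely, for $\cY=\bP(1,1,2)$ the minimal curves are the rulings $\bP(1,2)$ through the $\bZ_2$-vertex, with $-K_\cY\cdot C=2$, $\age(g^{-1})=1$, $d=3=n+1$; through a general point there is exactly \emph{one} such curve, so the tangent map to $\bP(T_x\cY)\cong\bP^1$ is nowhere near dominant, and no deformation-rigidity argument can repair this. The orbifold marked point is not bookkeeping to be controlled at the end; it destroys the very first claim of your VMRT step.

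The paper's proof circumvents this by anchoring the family at the \emph{orbifold} point: it fixes $f(\infty)\in\cY_g$, the image of the orbifold point of $\bP^1(1,\ell)$, where the hypothesis does give a family $S=\Mor_{(1,g)}(\bP^1(1,\ell),\cY;f|_{\{\infty\}})$ of the right dimension, and the tangent map records the leading coefficients of a local lift of $f$, landing not in $\bP(T_{f(\infty)}\cY)$ but in the \emph{weighted} projective space $\bP_{\mathbf{w}}$ with $\mathbf{w}=(a_1,\dots,a_n)$ the weights of the $\bZ_\ell$-isotropy on $T_{f(\infty)}\cY$ --- this is exactly where the weighted structure of the conclusion originates. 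One then shows the induced map $\cM=S/G\to\bP_{\mathbf{w}}$ is finite, of maximal rank and representable, hence an orbifold covering, and concludes not by passing to an orbifold universal cover (whose existence and finiteness you would separately have to justify) but by blowing down the section of the universal family $\cZ=\bP(\cO_{\bP_{\bar{\mathbf{w}}}}(-\ell)\oplus\cO)$, producing directly a representable covering $\bP(\bar{\mathbf{w}},\ell)\to\cY$. Two smaller corrections: the splitting of $f^*T\cY$ is not a consequence of ampleness (ampleness only forces positivity of the summands); it holds because the domain $\bP^1(1,\ell)$ has a single orbifold point, which is why the paper insists on such domains and cites \cite{MT12}. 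And the forced splitting type is $\cO(a_1)\oplus\cdots\oplus\cO(a_i+\ell)\oplus\cdots\oplus\cO(a_n)$, pinned down by an integrality count of Euler characteristics, not literally $\cO(2)\oplus\cO(1)^{\oplus(n-1)}$.
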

See \cite{Cheng17} for a related characterization of smooth projective spaces among normal varieties with quotient singularities. 
The method of proof is the same as Mori, which uses a family of orbifold rational curves of minimal degree that pass through a fixed (orbifold) point to sweep out the whole orbifold. The ampleness is used to deduce that there are no obstructions to the deformation theory of such curves. It is important for this purpose that we use orbifold rational curves with only one non-trivial orbifold point, which guarantees the splitting of any orbifold holomorphic vector bundle into a direct sum of orbifold line bundles (see \eqref{eq-split}).

\subsection*{Acknowledgments}
C. Li is partially supported by NSF (Grant No. DMS-2305296). Z. Zhou is supported by National Key R\&D Program of China under Grant No.2023YFA1010500, NSFC-12288201 and NSFC-12231010. C. Li thanks Qile Chen for helpful discussions on twisted stable maps and thanks Chris Woodward for telling him the reference \cite{MT12}. 

\section{Proof of Main results }\label{s2}

We will use the notion in our previous paper \cite{LZ24}. 
The starting point of our proof is a formula for the mld of an isolated Fano cone singularity $(X,o)$ derived in \cite[Proposition 2.12]{LZ24}. To state the formula, we assume that the $X$ is the given as the orbifold cone $C(\cY, \CL)$ where the orbifold base $\cY=(Y, \Delta)$ is obtained as the quotient $(X\setminus o)/\bC^*$ and $\CL$ is the associated orbifold line bundle. The klt condition is equivalent to the condition that $(Y, \Delta)$ is a klt Fano orbifold and $-K_\cY=r \CL$ for some $r\in \bQ_{>0}$. Throughout this paper, we use $(\cY, \CL)$ to denote a Delige-Mumford stack (or equivalently an orbifold) equipped with an orbifold line bundle and use $(Y, L)$ to denote the associated coarse moduli space as an algebraic variety equipped with a $\bQ$-line bundle. 

let $\mu: X'\rightarrow X$ be the extraction of orbifold base $Y$ such that $X'$ is isomorphic to the total space of the orbifold line bundle $\CL^{-1}\rightarrow \cY$. Now $X'$ as an affine variety has only cyclic quotient singularities along zero section $Y_0$. Fix any $p\in Y_0$ and choose a neighborhood $U$ of $p$ such that $U$ is locally isomorphic to 
\begin{equation}\label{eq-quot}
\bC\times \bC^{n-1}/\frac{1}{m}(1, b_2,\dots, b_n)%=
%\mathrm{Spec}(\bC[x_1^m, x_2 x_1^{-b_2}, \dots, x_n x_1^{-b_n}]).
\end{equation}
The natural projection $\pi: X'\rightarrow Y$ is induced by the map $(x_1, x_2, \dots, x_n)\mapsto (x_2, \dots, x_n)$. 

\begin{prop}
With the above notation, we have the formula:
\begin{equation}\label{eq-mld1}
    \mld(o, X)=\min_{p,g\neq 1}\left\{r, \frac{1}{m}\left(r w_1(g)+\sum_{i=2}^n w_i(g)\right) \right\}
\end{equation}
where $p$ ranges over all quotient singularities on $Y_0\subset X'$ and $g$ ranges over all non-identity elements in the stabilizer group $G_p\cong \bZ_m$ that satisfies $g^* x_i=e^{2\pi\sqrt{-1}w_i/m}x_i$ with $0\le w_i<m$. 
\end{prop}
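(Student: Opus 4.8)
The plan is to start from the reduction furnished by \cite[Proposition 2.12]{LZ24}, which reads $\mld(o,X)$ off the extraction $\mu\colon X'\to X$. Since $\mu^{-1}(o)=Y_0$, every divisorial valuation with center $o$ on $X$ has its center on $X'$ contained in $Y_0$, and this center is either $Y_0$ itself or a proper subvariety of $Y_0$. The first case contributes the log discrepancy $a(Y_0;X)=r$, which is exactly the content of the relation $K_{X'}=\mu^*K_X+(r-1)Y_0$ coming from $-K_\cY=r\CL$. Thus
\begin{equation*}
\mld(o,X)=\min\Bigl\{\,r,\ \inf_E\, a(E;X)\,\Bigr\},
\end{equation*}
where $E$ runs over divisors over $X'$ whose center is a proper subvariety of $Y_0$, and the whole task is to evaluate this second quantity.

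The second step is the crepant-type transformation of log discrepancies along $\mu$. For any such $E$, pulling back $K_{X'}=\mu^*K_X+(r-1)Y_0$ to a common resolution gives
\begin{equation*}
a(E;X)=a(E;X')+(r-1)\,\mathrm{ord}_E(Y_0),
\end{equation*}
so that minimizing $a(E;X)$ amounts to minimizing the linear combination $a(E;X')+(r-1)\,\mathrm{ord}_E(Y_0)$ over valuations centered in $Y_0$. Because $X'$ carries only cyclic quotient singularities along $Y_0$, this becomes a purely local, toric question near each singular point $p$, which I would analyze in the chart \eqref{eq-quot}.

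Third, I would carry out the local toric computation. Both $a(\,\cdot\,;X')$ and $\mathrm{ord}_{(\cdot)}(Y_0)$ are linear functions on the space of monomial valuations (the cone $N_\bR$ of the toric chart), so the infimum is attained at toric valuations; these correspond to the lattice points $\tfrac1m(w_1(g),\dots,w_n(g))\in N$ lying in the cone, hence to the elements $g\in G_p\cong\bZ_m$. For the divisor $E_g$ attached to such a $g$, with weights $w_i=w_i(g)$, the age formula gives $a(E_g;X')=\age(g)=\tfrac1m\sum_{i=1}^n w_i$, while $Y_0=\{x_1=0\}$ is the toric divisor of the first coordinate ray, whence $\mathrm{ord}_{E_g}(Y_0)=\tfrac{w_1}{m}$. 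Substituting into the transformation formula yields
\begin{equation*}
a(E_g;X)=\frac1m\sum_{i=1}^n w_i+(r-1)\frac{w_1}{m}=\frac1m\Bigl(r\,w_1+\sum_{i=2}^n w_i\Bigr),
\end{equation*}
and taking the minimum over all singular points $p$ and all $g\neq1$, together with the value $r$ coming from $Y_0$, gives the asserted formula \eqref{eq-mld1}.

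I expect the main obstacle to be the justification that the infimum over all valuations centered in $Y_0$ is realized by the toric valuations $E_g$, and is never beaten by valuations centered at smooth points of $X'$ or at generic points of a possibly positive-dimensional singular locus. This requires the structural input that the minimal log discrepancy of a cyclic quotient singularity is computed by monomial valuations, now adapted to the presence of the extra linear term $(r-1)\,\mathrm{ord}_E(Y_0)$; once the problem is reduced to minimizing a single linear functional over the lattice points of the cone, the identification of the minimizers with group elements and the two weight computations are routine.
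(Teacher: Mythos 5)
Your proposal is correct and follows essentially the same route as the paper: the relation $K_{X'}=\mu^*K_X+(r-1)Y_0$ reduces everything to the minimal log discrepancy of the pair $(X',(1-r)Y_0)$ along $Y_0$ (your transformation rule $a(E;X)=a(E;X')+(r-1)\,\mathrm{ord}_E(Y_0)$ is precisely the log discrepancy function of that pair), and your local step --- Reid--Tai age plus the correction term, minimized over lattice points identified with elements of $G_p$ --- is exactly the paper's toric minimization of the linear functional $r\,x_1(v)+\sum_{i=2}^n x_i(v)$ over $\Lambda'$. The one ingredient you flag as the main obstacle, that the infimum over all valuations centered in $Y_0$ is achieved by monomial valuations, is treated the same way in the paper, namely by invoking the known formula for mlds of (locally toric) quotient singularities with boundary rather than by a new argument.
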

We give a short sketch of the proof. 

\begin{proof}
The exceptional divisor of $\mu: X'\rightarrow X$ is equal to the zero section $Y_0$ of the orbifold line bundle $\CL^{-1}$ which is the underlying variety of the orbifold $\cY_0$. 
Moreover we have the formula $K_{X'}=\mu^* K_{X}+(r-1)Y_0$. 
So if we set $\mld(Y_0; X', (1-r)Y_0)=
\min\{A(v; X', (1-r)Y_0); \mathrm{center}(v)\subset Y_0\}$ where $v$ ranges over all divisorial valuations over $X'$, then there is an equality
$$
\mld(o, X)=\min\left\{r, \mld(Y_0;  X', (1-r)Y_0)\right\}. 
$$
 
Now we can use the formula for mld of quotient singularities (with a boundary divisor) as in \cite{LZ24}. Since cyclic quotient singularities are also locally toric, we can also use toric geometry to calculate the quantity as follows. Let $p\in Y_0$ be a point such that a neighborhood of $p$ is modeled on the quotient \eqref{eq-quot}. Let $\bZ^n$ be the standard lattice in $\bR^n$ and denote by $\Lambda'$ the larger lattice $\bZ^n+\bZ\frac{1}{m}(1, b_2, \dots, b_n)$. 
Let $\Lambda'\cap (0, 1]^n=\{v_1,\dots, v_{d_p}, \mathbf{1}\}$ with $\mathbf{1}=(1,1,\dots, 1)$. Then
$\mld(U\cap Y_0; X', (1-r)Y_0)=\min_{1\le k\le d_p}\{\sum_{i=2}^n x_i(v_k)+r x_1(v_k)\}$ where $x_j(v_k)$ denotes the $j$-th coordinate of the vector $v_k$ for $1\le j\le n$. On the other hand, each $v_k$ corresponds to $g_k\in G_p=\Lambda'/\bZ^n$ and $x_j(v_k)=\frac{1}{m}w_j(g_k)$. So we get the desired formula. 
\end{proof}

It is now convenient to introduce 
$\age(g)=\sum_{i=2}^n \frac{w_i(g)}{m}$, which is the $\age$ invariant in \cite{CR02} for the twisted sector corresponding to $g$ on the base orbifold $\cY$. We use $I\cY$ to denote the inertia orbifold of $\cY$ \cite[Definition 2.49]{Ruanbook}, on which $\age$ is locally constant \cite[Lemma 4.6]{Ruanbook}. Locally, connected components of $I\cY$ are indexed by the conjugacy classes of the isotropy groups. We use $\cY_{1}\subset I\cY$ to denote the non-twisted sector, which is diffeomorphic to $\cY$ and the inverse $g\mapsto g^{-1}$ makes sense on $\pi_0(I\cY)$. Then we can re-write \eqref{eq-mld1} in the following form:
\begin{equation*}
    \mld(o, X)=\min_{g\in \pi_0(I\cY),g\ne 1}\left\{r, \frac{rw_1(g)}{m}+\age(g) \right\}=\min_{g\in \pi_0(I\cY),g\ne 1}\left\{r, \frac{rw_1(g^{-1})}{m}+\age(g^{-1}) \right\}.
\end{equation*}
As $w_1(g^{-1})=0$ if $w_1(g)=0$ and $w_1(g^{-1})=m-w_1(g)$ if $w_1(g)\ne 0$, we have
\begin{equation}\label{eq-mld2}
    \mld(o, X)=\min_{g\in \pi_0(I\cY),g\ne 1}\left\{r, \frac{r(m-w_1(g))}{m}+\age(g^{-1}) \right\}=\min_{g\in \pi_0(I\cY)}\left\{\frac{r(m-w_1(g))}{m}+\age(g^{-1}) \right\}.
\end{equation}
The last equality follows from that $\frac{r(m-w_1(1))}{m}+\age(1^{-1})=r$.

Our main new observation in this note is that the quantity on the right-hand side of the above formula can be related to the dimension of the moduli space of twisted (or orbifold rational curves. This is motivated on the one hand by the fact that the lSFT invariant calculates the dimension of certain moduli space in symplectic field theory (see \S \ref{sec-symp} for an explanation) and on the other hand by similar formulae that appear in the study of orbifold Gromov-Witten invariants (\cite{CR02,AGV08})

Let $M$ be the smooth link of the Fano cone singularity $(X,o)$. We consider the finite set $S$ of isotropy groups (including the trivial group, which is the isotropy group for a generic point) of the $S^1$ action on $M$. The set $S$ is equipped with a partial order, we say $G_x>G_y$ if $G_y\subset G_x\subset S^1$ is a subgroup. For $G\in S$,  the quotient of the fixed point set $M^G/S^1$ gives rise to a branch locus $\cY_G$ of the quotient K\"ahler orbifold $\cY$ giving $M$ a stratification over the partial order set $S$. For non-minimal $G\in S$, we use $G^-$ to denote the unique maximal element that is smaller than $G$. We formally define $G^-=\emptyset$ when $G$ is the minimal element of $S$. By \cite[Proposition 3.3]{LZ24}, the Reeb orbits with period at most the period of a principle orbit (the simple orbit over a generic point) are parameterized by $M^G$ plus a multiplicity $k\in G\backslash G^-$. In particular, the space of unparameterized Reeb orbits with period at most the period of a principle orbit can be identified $I\cY$. 

We use $\frg=(g_1,\ldots,g_k)$ to denote a sequence of $k$ elements in $\pi_0(I\cY)$, which could be repetitive.
By an orbifold curve $\cC=(C, Q=\sum_{i=1}^k (1-\ell_i^{-1})q_i)$, we mean a smooth Riemann surface $C$ with an orbifold structure $\bC/\bZ_{\ell_i}$ with $\ell_i\in \bZ_{>0}$ at each $q_i\in \mathrm{Supp}(Q)$. 
Note that we allow the trivial orbifold structure at $q_i$ which corresponds to $\ell_i=1$. 
We will use the notion of twisted maps in the sense of Abramovich-Vistoli as defined in \cite{AV02}.\footnote{We always identify an orbifold with the corresponding Deligne-Mumford stack. } In particular, each marked point $q_i$ gives rise to a gerbe $\mathcal{Q}_i$ banded by $\bZ_{\ell_i}$ and we have an evaluation map $\ev_{q_i}(f)\in I\cY$.  
We denote by $\Mor_\frg((C,Q),\cY)$ the stack of twisted maps $f: \cC\rightarrow \cY$ that satisfies $\ev_{q_i}(f)\in \cY_{g_i}$ for each $1\le i\le k$ (see \cite{Ols06}). In the setting of differential geometry, this corresponds to the moduli space of representable good orbifold maps defined in \cite{CR02}, or equivalently representable orbifold maps using the groupoid languages in \cite[\S 5]{LU04}. For each $f\in \Mor_{\frg}((C, Q), \cY)$, we denote by 
$\Mor_{\frg}((C, Q), \cY; f|_Q)$ the subspace of twisted maps $\hat{f}$ in $\Mor_{\frg}((C, Q), \cY)$ that satisfy $\ev_{q_i}({\hat{f}})=\ev_{q_i}(f)$ for each $1\le i\le k$.

The following estimate is the orbifold version of the standard estimate in the smooth case (see \cite[2.11]{Deb01}). Note that on the right side of the estimate, we will use the intersection theory on Deligne-Mumford stacks as explained in \cite[\S 2]{AGV08}. 
\begin{lem}
With the above notation, we have the inequality
    \begin{equation}\label{eq-RR}
        \dim_{[f]}\Mor_{\frg}((C, Q), \cY; f|_Q)\ge -K_\cY\cdot f_*\cC+(1-g(C))\dim \cY-(\sum_{i=1}^k \age(g_i)+\dim \cY_{g_i}).
    \end{equation}   
\end{lem}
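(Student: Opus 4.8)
The plan is to imitate Debarre's smooth argument (\cite[2.11]{Deb01}) but to replace ordinary Riemann--Roch on the domain by its orbifold (Kawasaki) version, which is exactly where the $\age$ terms will enter. Throughout set $E := f^*T\cY$, an orbifold vector bundle of rank $\dim\cY$ on the orbicurve $\cC=(C,Q)$. First I would record the deformation-theoretic lower bound. Since the domain $(C,Q)$ is fixed, the deformation theory of twisted maps of Abramovich--Vistoli and Olsson (\cite{AV02,Ols06}, or \cite{CR02,LU04} in the groupoid picture) identifies the Zariski tangent space of $\Mor_\frg((C,Q),\cY)$ at $[f]$ with $H^0(\cC,E)$ and places the obstructions in $H^1(\cC,E)$. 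The general principle that local dimension is at least $\dim(\text{tangent})-\dim(\text{obstruction})$ then gives
\[
\dim_{[f]}\Mor_\frg((C,Q),\cY)\ \ge\ h^0(\cC,E)-h^1(\cC,E)=\chi(\cC,E).
\]

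Next I would pass from the free problem to the fixed-evaluation problem. The space $\Mor_\frg((C,Q),\cY;f|_Q)$ is precisely the fiber over $\ev_Q(f)$ of the total evaluation map $\ev_Q:\Mor_\frg((C,Q),\cY)\to\prod_{i=1}^k\cY_{g_i}$, whose target is a smooth Deligne--Mumford stack of dimension $\sum_{i=1}^k\dim\cY_{g_i}$ (the twisted sectors, being components of the inertia stack of a smooth stack, are smooth). Hence the fiber has local dimension at least that of the source minus $\sum_i\dim\cY_{g_i}$, so that
\[
\dim_{[f]}\Mor_\frg((C,Q),\cY;f|_Q)\ \ge\ \chi(\cC,E)-\sum_{i=1}^k\dim\cY_{g_i}.
\]
Equivalently, one may package this via the subsheaf $E^{-Q}\subset E$ of sections vanishing at the marked points in the invariant directions, sitting in $0\to E^{-Q}\to E\to\bigoplus_{i=1}^k(E_{q_i})^{g_i}\to 0$: an equivariant section evaluates at the gerbe $\mathcal{Q}_i$ into the $g_i$-invariant part $(E_{q_i})^{g_i}\cong T_{\ev_{q_i}(f)}\cY_{g_i}$ of dimension $\dim\cY_{g_i}$, and since the quotient is a skyscraper one gets $\chi(\cC,E^{-Q})=\chi(\cC,E)-\sum_i\dim\cY_{g_i}$ directly.

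It then remains to compute $\chi(\cC,E)$. Orbifold Riemann--Roch on the orbicurve reads $\chi(\cC,E)=\deg E+(1-g(C))\,\mathrm{rank}\,E-\sum_{i=1}^k\age_{q_i}(E)$. Two identifications finish the argument. By the projection formula in the orbifold intersection theory of \cite[\S 2]{AGV08}, $\deg E=c_1(T\cY)\cdot f_*\cC=-K_\cY\cdot f_*\cC$. And because the evaluation condition $\ev_{q_i}(f)\in\cY_{g_i}$ means that the $\bZ_{\ell_i}$-action on $E_{q_i}=T\cY_{f(q_i)}$ is exactly the action of $g_i$, the Kawasaki age of $E$ at $q_i$ is the combinatorial age invariant $\age_{q_i}(E)=\age(g_i)$ of the paper. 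Substituting and collecting terms yields \eqref{eq-RR}. As a consistency check, for trivial isotropy ($g_i=1$, $\age(g_i)=0$, $\cY_{g_i}=\cY$) this recovers Debarre's bound $-K_Y\cdot f_*C+(1-g)\dim Y-k\dim Y$.

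The main obstacle will be making the deformation-theoretic lower bound rigorous in the stacky setting: one must verify that the tangent--obstruction theory of twisted maps with fixed domain and fixed images is governed precisely by $H^\bullet(\cC,E)$ and its evaluation-constrained refinement $H^\bullet(\cC,E^{-Q})$ --- in particular that the evaluation at $\mathcal{Q}_i$ genuinely only sees the invariant subspace $(E_{q_i})^{g_i}$, so that the correct dimension drop is $\dim\cY_{g_i}$ rather than $\dim\cY$ --- and that the Kawasaki age correction matches the $\age(g_i)$ used elsewhere in the paper. Once the correct orbifold sheaf and its skyscraper quotient are pinned down, the degree and rank bookkeeping via orbifold Riemann--Roch and the projection formula is routine.
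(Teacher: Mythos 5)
Your proposal is correct and follows essentially the same route as the paper: a deformation-theoretic lower bound by an Euler characteristic, the identification of the evaluation-constraint drop as $\sum_i\dim\cY_{g_i}$ via the $g_i$-invariant part of $f^*T\cY$ at the gerbe (the paper cites this as the tangent bundle lemma \cite[Lemma 3.6.1]{AGV08}), and orbifold Riemann--Roch with age corrections (\cite[Theorem 7.2.1]{AGV08}). Your subsheaf $E^{-Q}$ with its skyscraper quotient is precisely the paper's $f^*T\cY\otimes\bigotimes_i\mathcal{I}_{\mathcal{Q}_i}$ and its exact sequence, so the two arguments coincide up to packaging.
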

\begin{proof}
First, similar to the smooth case, by the deformation theory of the stacks (see \cite{Ols06}), we get
 \begin{eqnarray*}
\dim_{[f]}\Mor_\frg((C, Q), \cY; f|_Q)&\ge& \chi(\cC, f^*T\cY\otimes \otimes_i  \mathcal{I}_{\mathcal{Q}_i})
\end{eqnarray*}
    where $\mathcal{I}_{\mathcal{Q}_i}$ is the ideal sheaf of $\mathcal{Q}_i$. From the exact sequence, 
    \begin{equation*}
        0\rightarrow \mathcal{I}_{\mathcal{Q}_i}\rightarrow \mathcal{O}_{\cC}\rightarrow \cO_{\mathcal{Q}_i}\rightarrow 0
    \end{equation*}
   we get the identities:
    \begin{eqnarray*}
\chi(\cC, f^*T\cY\otimes \otimes_i  \mathcal{I}_{\mathcal{Q}_i})
&=&\chi(\cC, f^*T\cY)-\sum_i \chi(\mathcal{Q}_i, f^*T\cY)\\
&=&\chi(\cC, f^*T\cY)-\sum_i \chi(\mathcal{Q}_i, f^*T_{\cY_{g_i}})\\
&=&\left(-K_{\cY}\cdot f_*\cC+(1-g(C))\dim\cY-\sum_i \age(g_i)\right)-\sum_i \dim \cY_{g_i}. 
    \end{eqnarray*}
  For the second identity we used the tangent bundle lemma from \cite[Lemma 3.6.1]{AGV08}. The last identity used the Riemann-Roch theorem for twisted curves (\cite[Theorem 7.2.1]{AGV08}).
\end{proof}

When $C=\bP^1$ and $Q=0\cdot \{0\}+(1-\ell^{-1})\{\infty\}$, the orbifold curve $(\bP^1, Q)$ is identified with the weighted projective line $\bP^1(1, \ell)$ with $0=[1,0]$ and $\infty=[0,1]$. When $\mathfrak{g}=(1, g)$ we get from \eqref{eq-RR}:
    \begin{eqnarray*}
       \dim_{[f]}\Mor_{(1,g)}(\bP^1(1, \ell), \cY; f|_{\{0,\infty\}})&\ge& -K_{\cY}\cdot f_*\bP(1,\ell)+(\dim \cY-(\age(g)+\dim \cY_g))-\dim \cY\\
       &=&-K_{\cY}\cdot f_*\bP(1,\ell)+\age(g^{-1})-\dim \cY.
    \end{eqnarray*}
The last identity used the fact 
$\dim\cY-(\age(g)+\dim\cY_g)=\age(g^{-1})$ and is a main evidence that the formula \eqref{eq-mld2} is related to the dimension of moduli space of twisted rational curves. 

Now note that $f^*\CL^{-1}$ is an orbifold line bundle over $\bP^1(1, \ell)$ and, according to the definition of twisted curves, the morphism of the stabilizer groups $\bZ_\ell\rightarrow G_{f(\infty)}\cong \bZ_m$ is injective with $1$ mapped to $g$. By Setting $\la g\ra\cong \bZ_{\ell}$ to be a subgroup of $G_{f(\infty)}$, the twisted curve then satisfies $\ev(f)\in \cY_g$. Assume $f(\infty)$ is locally modeled on 
\begin{equation*}
    \bC\times \bC^{n-1}/\frac{1}{m}(1, b_2, \dots, b_n).
\end{equation*}
The generator of $\bZ_{\ell}$ is mapped to $p\in \bZ_m$ such that the order of $p$ is $\ell$. Note that $f^*\CL^{-1}=\cO_{\cC}(k)$ for $k\in \bZ_{<0}$ where $\cC=\bP(1,\ell)$. We have $\frac{p\ell}{m}=k \mod \ell$. As $w_1(g)=p \in [0,m)$, we have $k\le \frac{w_1(g)\ell}{m}-\ell$. Since $c_1(\cO_{\cC}(1))\cdot \cC=\frac{1}{\ell}$, we get the following inequality:
\begin{equation*}
    -K_{\cY}\cdot f_*\cC=r \CL\cdot f_*\cC =r f^*\CL\cdot \cC = r \cdot \frac{-k}{\ell} \ge r \cdot \frac{\ell-\frac{w_1(g)\ell}{m}}{\ell}=r\frac{m-w_1(g)}{m}. 
\end{equation*} 
For simplicity of notation, we set $d_{g^{-1}}(f)=-K_{\cY}\cdot f_*\cC+\age(g^{-1})$. Then the above discussion derives:
\begin{equation}\label{ineq-dg}
    \dim_{[f]}\Mor_{(1,g)}(\bP^1(1, \ell), \cY; f|_{\{0,\infty\}})+\dim \cY\ge d_{g^{-1}}(f)\ge \frac{r (m- w_1(g))}{m}+\age(g^{-1}). 
\end{equation}
Because the minimum \eqref{eq-mld2} is taken over all strata, we get
\begin{equation}\label{ineq-mld}
    \mld(o, X)\le \min \left\{ d_{g^{-1}}(f); f\in \Mor_{(1,g)}(\bP^1(1, \ell); f|_{\{0,\infty\}}), \ell\in \bZ_{>0}\right\}. 
\end{equation}

So in order to prove the main theorem, we just need to prove the inequality $d_{g^{-1}}(f)\le n=\dim \cY+1$ for some twisted map $f: \bP^1(1, \ell)\rightarrow \cY$. If $\cY$ is a smooth Fano manifold without orbifold points, then this was achieved by Mori via a reduction to characteristic $p>0$ and the bend-and-break method \cite{Mori79}. In \cite{CT09}, this has been partly generalized to the orbifold setting but the authors were mainly interested in the singular Fano varieties with special singularities. We will explain that Mori's method indeed gives us the sharp upper bound of mld in our problem. 

In the bend-and-break method, there are two steps. First, we need to obtain a non-trivial twisted map $f: \bP(1, \ell)\rightarrow \cY$. This is achieved by deforming a morphism from a smooth curve of genus $g\ge 1$. Let $C$ be a smooth Riemann surface of genus $g\ge 1$ and $f: C\rightarrow Y$ be a non-trivial morphism. First by \cite[Proof of Theorem 7.1.1]{AV02}, there exists a ramified cover $C'\rightarrow C$ such that $f$ lifts to a (twisted) morphism $f': C'\rightarrow \cY$ where $C'$ does not have non-trivial orbifold structure. 
\begin{prop}
    Let $f: C\rightarrow \cY$ be a morphism from a smooth curve of genus $g(C)\ge 1$. Fix any $c\in C$ %that satisfies $f(c)\in \cY_1=\cY$. (this is tautological if C is smooth)
    if $\dim_{[f]}\Mor(C, \cY; f|_{c})\ge 1$, then there exists a nontrivial twisted map $\hat{f}: \bP^1(1, \ell)\rightarrow \cY$. 
    %that satisfies $\hat{f}(0)\in \cY_1$. 
\end{prop}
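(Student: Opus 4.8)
The plan is to carry out Mori's bend-and-break argument in the twisted category, combining the properness of the Abramovich--Vistoli moduli of twisted stable maps with a Hodge-index computation on the coarse surface. First I would turn the dimension hypothesis into a genuine family. Since $\dim_{[f]}\Mor(C,\cY;f|_c)\ge 1$, there is a positive-dimensional component through $[f]$, hence a smooth pointed curve $(T,0)$ and a nonconstant morphism $T\to \Mor(C,\cY;f|_c)$ with $0\mapsto [f]$; equivalently a family of twisted maps $f_t\colon C\to\cY$ with $\ev_c(f_t)$ constant (the domain carries no orbifold structure because $C$ is smooth). Viewing this as a $T$-point of the proper Deligne--Mumford stack $\overline{\mathcal K}_{g(C)}(\cY,\beta)$ of twisted stable maps \cite{AV02}, properness lets me extend it, after a finite base change, to a morphism $\bar T\to \overline{\mathcal K}_{g(C)}(\cY,\beta)$ over a smooth projective compactification $\bar T$. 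The limit over a boundary point is a twisted stable map $f_\infty\colon\cC_\infty\to\cY$ with $\cC_\infty$ a nodal twisted curve.

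Next I would show that the central fiber must break. Passing to coarse spaces, the family gives a rational map $\bar F\colon S=C\times\bar T\dashrightarrow Y$, and the section $\Sigma=\{c\}\times\bar T$ is contracted to the point $f(c)$. After resolving indeterminacy on some $\tilde S\to S$ and setting $L=\bar F^{*}H$ for an ample $H$ on $Y$, one has $L\cdot\Sigma=0$ while $L\cdot(\text{fiber})=d>0$. If $\bar F$ were already a morphism with irreducible fibers, then on the trivial ruled surface $S$ the class $L$ would be numerically a multiple of the section class, forcing $L^{2}=0$; but because $g(C)\ge 1$ the finiteness of the automorphism group rules out the family being a mere reparametrization, so the $f_t$ sweep out a surface in $Y$ and $L^{2}>0$, a contradiction. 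Hence $\bar F$ has indeterminacy, and correspondingly the degeneration $\cC_\infty$ in the proper moduli is reducible, carrying a rational component on which $f_\infty$ is non-constant.

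Finally I would extract the desired curve. Choosing an \emph{outermost} rational component $R$ of $\cC_\infty$ (a leaf of the tree of rational tails attached to the genus $g(C)$ part), $R$ meets the rest of $\cC_\infty$ in a single node. As a twisted curve, the stacky structure of $R$ is supported only at its special points; since its only special point is the node, which is banded by some $\bZ_\ell$, we get $R\cong\bP^1(1,\ell)$ (with the node as $\infty$ and, should the specialization of $c$ land on $R$, the smooth point as $0$). Stability of the limit forces $\hat f:=f_\infty|_R\colon\bP^1(1,\ell)\to\cY$ to be non-constant, which is precisely the asserted nontrivial twisted map.

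I expect the main obstacle to be reconciling the two perspectives of the second step: one must ensure the stable reduction inside $\overline{\mathcal K}_{g(C)}(\cY,\beta)$ (which may require base change and which introduces twisted nodes) matches the coarse indeterminacy analysis, and in particular that non-constancy of the family of \emph{twisted} maps descends to non-constancy of the coarse maps, so that the contracted-section and $L^{2}=0$ contradiction is valid. The bookkeeping guaranteeing that the extracted leaf has \emph{exactly one} orbifold point---so that the target curve is genuinely of the form $\bP^1(1,\ell)$---is the second delicate point, and rests on the fact that a twisted curve carries stacky structure only at its nodes and markings.
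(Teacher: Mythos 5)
Your overall strategy (bend-and-break via properness of the Abramovich--Vistoli stack of twisted stable maps, then extracting a leaf of the rational tree in the limit) is close in spirit to the paper's proof, which instead resolves the indeterminacy of the coarse evaluation map on $C\times\bar T$ and lifts the special fiber to a twisted map by the valuative-criterion argument of \cite[Proof of Proposition 6.0.1]{AV02}. However, the step of your argument that forces the degeneration---the heart of the ``bend''---has a genuine gap. You argue: if the coarse evaluation $\bar F\colon C\times\bar T\dashrightarrow Y$ were a morphism, then $L^2=0$ (fine, via Hodge index, since $L$ is nef, $L\cdot\Sigma=0$, $\Sigma^2=0$ and $\Sigma\not\equiv 0$); while nontriviality of the family implies the $f_t$ ``sweep out a surface in $Y$,'' so $L^2>0$. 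The second implication is false. A nonconstant family in $\Mor(C,\cY;f|_c)$ can consist of pairwise distinct maps all having the \emph{same} one-dimensional image: for instance, take $C$ hyperelliptic of genus $2$, $B\subset Y$ a rational curve through $f(c)$, $f\colon C\to B$ the hyperelliptic double cover, and $f_t=\varphi_t\circ f$ where $\varphi_t$ ranges over automorphisms of $B\cong\bP^1$ fixing $f(c)$; this is a positive-dimensional family with image constantly equal to $B$. In that case the evaluation image is a curve, $L^2=0$, and your dichotomy yields no contradiction, hence no indeterminacy, no breaking, and no twisted map. Note that in this scenario the mere presence of a rational curve in the coarse space $Y$ does not rescue the conclusion: lifting its normalization to a representable twisted map into $\cY$ may require orbifold structure at several points (e.g.\ the identity map to the coarse space of a football $\bP^1(a,b)$ needs two), whereas the proposition demands a domain $\bP^1(1,\ell)$ with a single orbifold point---which is exactly what the degeneration argument, placing the orbifold point at the node, is designed to produce.

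The fix is the standard argument that the paper invokes from \cite[Proof of Corollary 1.7]{KM98} or \cite[Proof of Proposition 3.1]{Deb01}, which makes no claim about the image being a surface: if the evaluation extended to a morphism $e$ on $C\times\bar T$, then since $\Sigma=\{c\}\times\bar T$ is contracted and every fiber $\{x\}\times\bar T$ of the projection to $C$ is algebraically, hence numerically, equivalent to $\Sigma$, ampleness of $H$ forces $e$ to contract every $\{x\}\times\bar T$; by rigidity, $e$ then factors through the projection to $C$, so all the $f_t$ coincide, contradicting that $T$ maps nonconstantly into $\Mor(C,\cY;f|_c)$. With that replacement (and keeping $c$ as a marked point, i.e.\ working in $\overline{\mathcal{K}}_{g(C),1}(\cY,\beta)$, so that your leaf-extraction works: a constant genus-zero leaf has at most two special points and is therefore unstable), your proof goes through, and it is then essentially a repackaging of the paper's: the properness of the twisted stable map stack that you quote is established by precisely the base-change-plus-purity argument that the paper applies fiberwise to put the orbifold structure at the nodes.
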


\begin{proof}
First, we use the argument as in \cite[Proof of Corollary 1.7]{KM98} or \cite[Proof of Proposition 3.1]{Deb01}.
Let $T$ be the normalization of a 1-dimensional substack of $\Mor(C, \cY; f|_c)$ passing through $[f]$ and let $\bar{T}$ be a smooth compactification of $T$. By the same reasoning as in \cite[Proof of Corollary 1.7]{KM98} or \cite[Proof of Proposition 3.1]{Deb01}, there exists $t_0\in \bar{T}$ such that $\ev$ is not defined at $(c, t_0)$. The indeterminacies of the induced rational map $\ev: C\times \bar{T}\dashrightarrow Y$ can be resolved by blowing up points to get a morphism $\widetilde{\ev}: S\stackrel{\epsilon}{\rightarrow} C\times \bar{T}\stackrel{\ev}{\dashrightarrow} X$.  The fiber $F_{t_0}$ of $t_0$ under the projection $S\rightarrow \bar{T}$ is the union of the strict transform $\hat{C}$ of $C\times \{t_0\}$ and a connected exceptional rational 1-cycle $E$ which is not entirely contracted by $\widetilde{\ev}$. 
%Since the latter is contracted by $\widetilde{\ev}$ to the point $f(c)$,
There is an irreducible $\bP^1$-component $E_1$ of $E$ such that $E_1$ intersects $\overline{F_{t_0}\setminus E_1}$ at a single node $p_0$.

Now by the proof of valuative criterion of twisted stable maps as in \cite[Proof of Proposition 6.0.1]{AV02} (see also \cite{CR02}), we can add orbifold structures to the nodes of the $F_{t_0}=\hat{C}\cup E$ so that the morphism $\widetilde{\ev}: F_{t_0}\rightarrow Y$ lifts to become a twisted map  $\widetilde{\ev}: \mathcal{F}_{t_0}\rightarrow \cY$. In more detail, at smooth points of $F_{t_0}$ we do not need to add a new orbifold structure. This is proved based on a base-change construction and a purity lemma (see \cite[Lemma 2.4.1]{AV02}). On a node $\{xy=0\}$ of $F_{t_0}$, there exists a chart $\{uv=0\}/\bZ_\ell$ where the action of $\bZ_\ell$ is described by $(u,v)\mapsto (\zeta u, \zeta^{-1}v)$ with $\zeta=\exp(2\pi\sqrt{-1}/\ell)$. In this chart, the map $\cF_{t_0}\rightarrow F_{t_0}$ is given by $x=u^\ell$ and $y=v^\ell$. 
Let $\mathcal{E}_1$ the orbifold curve $(E_1, (1-\ell^{-1})p_0)$.  
Then the induced twisted map $\hat{f}: \mathcal{E}_1\cong \bP^1(1, \ell)\rightarrow \cY$ is the wanted map. 
    
\end{proof}

In the second step, we can degenerate the twisted map $\bP^1$ to another one if the dimension of the deformation space is large. 
\begin{prop}\label{prop-bab1}
    Let $f: \cC=\bP^1(1, \ell)\rightarrow \cY$ be an twisted map such that $f(0)\in \cY_1$ and $f(\infty)\in \cY_g$. Assume that $$\dim_{[f]}\Mor_{(1,g)}(\bP^1(1,\ell), \cY; f|_{\{0,\infty\}})\ge 2,$$ then there exists a non-trivial twisted map $\hat{f}: \hat{\cC}=\bP^1(1, \hat{\ell})\rightarrow \cY$ such that $f_*[C]$ is numerically equivalent to $\hat{f}_*[\hat{\cC}]+\sum_i (f_i)_*[\cC_i]$ where $\{f_i: \cC_i\rightarrow \cY\}$ is a nonempty collection of twisted maps.  
\end{prop}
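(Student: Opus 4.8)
The plan is to reduce to the classical (non-orbifold) second bend-and-break carried out on the coarse spaces, and then re-install the orbifold structure on the degenerate limit exactly as in the previous proposition. First I would pass from the $2$-dimensional family to a complete $1$-parameter family. The automorphism group $\Aut(\bP^1(1,\ell);0,\infty)\cong\bC^*$ of the domain fixing the two marked points acts on $\Mor_{(1,g)}(\bP^1(1,\ell),\cY;f|_{\{0,\infty\}})$ with one-dimensional orbits, so the hypothesis $\dim_{[f]}\ge 2$ produces a genuinely nonconstant family after quotienting. I would extract a complete smooth curve $\bar T$ carrying a nonconstant $1$-parameter family of twisted maps through $[f]$, and pass to the coarse spaces to obtain a rational map $F\colon\bP^1\times\bar T\dashrightarrow Y$ whose restriction to each fibre $\bP^1\times\{t\}$ is the coarse map underlying the $t$-th member. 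Because we have fixed $f|_{\{0,\infty\}}$, the two sections $\{0\}\times\bar T$ and $\{\infty\}\times\bar T$ are contracted by $F$ to the coarse images of $f(0)$ and $f(\infty)$.

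The heart of the argument is to show that $F$ cannot be a morphism; this step takes place entirely on the projective variety $Y$ and is Mori's second bend-and-break (see \cite[\S 3]{Deb01}, \cite{KM98}). If $F$ were a morphism, set $D=F^*H$ for an ample divisor $H$ on $Y$. Since $D\cdot(\bP^1\times\{t\})=d>0$ while $D$ meets both contracted sections in degree $0$, and these sections are fibres of the projection $\bP^1\times\bar T\to\bP^1$, the intersection relations force $D\equiv d\,s$ with $s$ the class of such a fibre, whence $D^2=0$. A nonzero nef divisor with $D^2=0$ forces the image of $F$ to be a curve $B$, and then $F^*\cO_{B}(1)\cong D$ has only $t$-independent sections, so $F$ factors through the projection to $\bP^1$ and the family is constant, contradicting nonconstancy. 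Hence $F$ has an indeterminacy point; resolving it by blow-ups $S\to\bP^1\times\bar T$ produces over the offending $t_0\in\bar T$ a reducible fibre $\hat C\cup E$, with $E$ a nontrivial exceptional rational cycle not entirely contracted by the resolved morphism $\tilde{\ev}\colon S\to Y$.

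I would then re-orbifold exactly as in the previous proposition. As there, choose an irreducible component $E_1\subset E$ meeting $\overline{F_{t_0}\setminus E_1}$ in a single node $p_0$, and apply the valuative criterion for twisted stable maps \cite[Proof of Proposition 6.0.1]{AV02} to lift the degenerate coarse fibre to a twisted stable map $\tilde{\ev}\colon\cF_{t_0}\to\cY$: smooth points keep their structure and each node acquires a cyclic band. The component $E_1$ then carries a single nontrivial orbifold point at $p_0$, so it is isomorphic to some $\bP^1(1,\hat\ell)$, and the induced map $\hat f\colon\bP^1(1,\hat\ell)\to\cY$ is nontrivial because $E_1$ is not contracted; the remaining components of $\cF_{t_0}$ supply the collection $\{f_i\colon\cC_i\to\cY\}$, which is nonempty since the fibre broke into at least two components. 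Finally, properness and flatness of the Abramovich--Vistoli moduli of twisted stable maps \cite{AV02,AGV08} keep the pushforward of the fundamental class constant along $\bar T$, giving the required numerical equivalence $f_*[C]\equiv\hat f_*[\hat\cC]+\sum_i (f_i)_*[\cC_i]$.

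The step I expect to be the main obstacle is not the surface geometry over $Y$, which is classical, but the orbifold bookkeeping in the last two steps: making precise that the coarse degeneration lifts to a genuine degeneration of twisted stable maps over $\bar T$ with the pushforward class preserved, so that the numerical equivalence holds at the level of the stack, and checking that the distinguished component $E_1$ retains exactly one stacky point after re-orbifolding so that $\hat\cC\cong\bP^1(1,\hat\ell)$. The fact that this re-orbifolding is literally the construction already used in the previous proposition is what makes me confident it goes through.
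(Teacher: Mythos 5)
Your proposal is correct and follows essentially the same route as the paper's proof: extract a one-parameter family through $[f]$ not contained in the $\bC^*$-reparametrization orbit, force an indeterminacy of the evaluation map by Mori--Debarre bend-and-break (which the paper outsources to \cite[Proposition 3.2]{Deb01} ``in the orbifold setting'' and you spell out via intersection theory on the coarse space), resolve, select a leaf component of the broken fibre, and re-install the orbifold structure via \cite[Proof of Proposition 6.0.1]{AV02} to obtain $\hat f\colon \bP^1(1,\hat\ell)\to\cY$. The only differences are expository: you make the coarse-space intersection argument and the flatness/properness justification of the numerical equivalence explicit, at the same level of rigor as the paper on the remaining points (e.g.\ the choice of a non-contracted leaf avoiding the stacky marking).
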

   Note that in general $\hat{f}(\infty)$ is different from $f(\infty)$. 
   \begin{proof}
  The proof is similar to that of Proposition \ref{prop-bab1}.  
       Let $T$ be the normalization of a 1-dimensional substack of $\Mor(\cC, \cY; f|_{\{0,\infty\}})$ passing through $[f]$ but not contained in its $\bC^*$-orbit and let $\bar{T}$ be a smooth compactification of $T$. By arguing as in \cite[Proposition 3.2]{Deb01} in the orbifold setting, there exists $t_0\in \bar{T}$ such that $\ev$ is not defined at $(0, t_0)$. We resolve the indeterminacy of the evaluation map $\ev: \bP^1\times \overline{T}\dashrightarrow X$ to get a fibered surface $\pi: S\rightarrow \bar{T}$ and a map $\tilde{\ev}: S\rightarrow X$ such that $F_{t_0}=\pi^{-1}(t_0)$ is a cycle of rational curves. There is an irreducible component $E_1$ of $F_{t_0}$ that intersects $\overline{F_{t_0}\setminus E_1}$ at a single node. 
       As before, we now use the same argument as \cite[Proof of Proposition 6.0.1]{AV02}
       to extend orbifold structures
       on $\bP^1\times T$ to $S$ (up to base change). This induces a $\bZ_{\hat{l}}$-orbifold structure at one point on $E_1\cong \bP^1$ and we get a twisted map $\hat{f}: \bP^1(1, \hat{\ell})\rightarrow \cY$ as wanted.  
   \end{proof}
  
We remark that the bend-and-break results in the orbifold setting similar to the above two propositions have also appeared in \cite[Proposition 3.5 and 3.6]{CT09}. 
Since the (image of) a twisted map can not break infinitely many times, the above two bend-and-break constructions produce a non-trivial bound (by also using \eqref{ineq-dg}).
\begin{prop}
    Assume that there exists a non-trivial smooth morphism $C \rightarrow \cY$ that satisfies 
    $\dim_{[f]}\Mor(C, \cY; f|_{c})\ge 1$. Then there exists a twisted map $f: \bP^1(1, \ell)\rightarrow \cY$ satisfying $f(0)\in \cY_1$, $f(\infty)\in \cY_g$ and  
     $$-K_{\cY}\cdot f_*\bP(1,\ell)+\age(g^{-1})-\dim \cY\le \dim_{[f]}\Mor_{(1,g)}(\bP^1(1, \ell), \cY; f|_{\{0,\infty\}})\le 1. $$
\end{prop}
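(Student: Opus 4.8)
The plan is to combine the two bend-and-break constructions established above into an iterative procedure that terminates at a twisted rational curve of controlled degree, and then to read off the upper bound from the Riemann--Roch estimate \eqref{ineq-dg}.

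First I would produce an initial twisted rational curve. Applying the first bend-and-break proposition above (the one deforming a morphism from a curve of genus $\ge 1$) to the given smooth morphism $C\to\cY$ with $\dim_{[f]}\Mor(C,\cY;f|_c)\ge 1$ yields a non-trivial twisted map $\hat f\colon\bP^1(1,\ell)\to\cY$. By construction this curve carries a non-trivial orbifold structure only at $\infty$, so after relabelling we may assume $f(0)\in\cY_1$ and $f(\infty)\in\cY_g$ for some $g\in\pi_0(I\cY)$.

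Next comes the iteration. If the current curve $f\colon\bP^1(1,\ell)\to\cY$ satisfies $\dim_{[f]}\Mor_{(1,g)}(\bP^1(1,\ell),\cY;f|_{\{0,\infty\}})\ge 2$, then Proposition~\ref{prop-bab1} produces a non-trivial twisted map $\hat f\colon\bP^1(1,\hat\ell)\to\cY$, again with $\hat f(0)\in\cY_1$ and $\hat f(\infty)\in\cY_{\hat g}$ (since only one orbifold point is introduced, on $E_1\cong\bP^1$), together with a nonempty collection $\{f_i\colon\cC_i\to\cY\}$ of non-constant twisted maps such that $f_*\cC\equiv\hat f_*\hat\cC+\sum_i (f_i)_*\cC_i$ numerically. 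Because $\cY$ is Fano, $-K_\cY$ is ample, hence positive on every non-trivial effective curve class; as $\{f_i\}$ is nonempty and non-constant, $-K_\cY\cdot\sum_i(f_i)_*\cC_i>0$ and therefore
\[
-K_\cY\cdot\hat f_*\hat\cC \;=\; -K_\cY\cdot f_*\cC-\Big(-K_\cY\cdot\sum_i(f_i)_*\cC_i\Big)\;<\;-K_\cY\cdot f_*\cC .
\]
Thus each break strictly decreases the $-K_\cY$-degree. I would then check that these degrees lie in a discrete subset of $\bR_{>0}$: as in the computation preceding \eqref{ineq-dg}, the order $\ell$ divides the order $m$ of the relevant cyclic stabilizer, and $-K_\cY\cdot f_*\cC=r|k|/\ell$ with $k\in\bZ_{<0}$ and $\ell\le m$. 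Since the set $S$ of isotropy groups is finite, only finitely many $m$ occur, so below any fixed bound there are only finitely many possible degrees, and all of them exceed a fixed positive constant. Consequently the strictly decreasing sequence of degrees cannot continue indefinitely, so after finitely many applications of Proposition~\ref{prop-bab1} we must reach a twisted map $f\colon\bP^1(1,\ell)\to\cY$ with $f(0)\in\cY_1$, $f(\infty)\in\cY_g$ and $\dim_{[f]}\Mor_{(1,g)}(\bP^1(1,\ell),\cY;f|_{\{0,\infty\}})\le 1$.

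Finally, feeding this curve into the Riemann--Roch lower bound \eqref{ineq-dg} gives
\[
-K_\cY\cdot f_*\bP(1,\ell)+\age(g^{-1})-\dim\cY\;\le\;\dim_{[f]}\Mor_{(1,g)}(\bP^1(1,\ell),\cY;f|_{\{0,\infty\}})\;\le\;1,
\]
which is exactly the asserted chain of inequalities. The step I expect to be the main obstacle is the termination argument: one has to be certain that each break removes a strictly positive amount of $-K_\cY$-degree, which requires that the broken-off components $\{f_i\}$ be genuinely non-constant so that Fano positivity applies, and that the attainable degrees form a discrete, bounded-below set, so that strict descent forces the process to halt precisely when the deformation dimension drops to at most $1$.
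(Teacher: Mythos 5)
Your proposal is correct and follows essentially the same route as the paper: there the proposition is justified in a single sentence, by iterating the two preceding bend-and-break propositions, invoking that a twisted map ``can not break infinitely many times,'' and applying the Riemann--Roch lower bound \eqref{eq-RR}/\eqref{ineq-dg} to the terminal curve. Your explicit termination argument---strict decrease of $-K_\cY\cdot f_*\cC$ at each break via Fano positivity, plus discreteness and positive lower bound of the attainable degrees $r j/\ell$ coming from finiteness of the isotropy orders---is precisely the justification the paper leaves implicit.
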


To achieve the dimension condition in the above results, Mori introduced a groundbreaking method of reducing to the field of characteristic $p>0$ and using the Frobenius morphism to increase the dimension of the deformation space. He then applied the bend-and-break to get a rational curve in positive characteristics with a uniform upper bound on their degrees. Finally, Mori concluded the existence of a rational curve in characteristic zero by an algebraic argument (see \cite[pg.15]{KM98}, \cite[pg.62-63]{Deb01}). 
The same line arguments can be applied in the setting of orbifolds (see \cite{CT09}). Not that, in the characteristic $p$
argument, we need to assume that the orders of the stabilizers are relatively prime to $p$. In other words, we need to work with tamed Deligne-Mumford stacks in the sense of \cite{AV02}. This is not a restriction, since there are only finitely many strata and the prime $p$ can be arbitrarily large. 
As a consequence, we get \begin{prop}
   Let $\cY$ be a Fano orbifold. There exists a twisted map $f: \bP^1(1, \ell)\rightarrow \cY$ satisfying $f(0)\in \cY_1$, $f(\infty)\in \cY_g$ and  
     $$d_{g^{-1}}(f)=-K_\cY\cdot f_*\bP(1,\ell)+\age(g^{-1})\le \dim \cY+1. $$
\end{prop}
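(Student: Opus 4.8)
The plan is to realize the final statement as the orbifold incarnation of Mori's characteristic-$p$ argument, using the preceding Proposition as the bend-and-break engine and the Frobenius morphism to feed it the dimension hypothesis it requires. The genuinely new ingredients are only two: first, producing the estimate $\dim_{[f]}\Mor(C,\cY;f|_c)\ge 1$ demanded by that Proposition, which cannot be arranged in characteristic $0$ for curves of genus $\ge 1$; and second, descending the resulting uniform degree bound from characteristic $p$ back to characteristic $0$.

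After disposing of the trivial case $\dim\cY=0$ and assuming $\dim Y\ge 1$, I would first produce a starting curve. Choosing a general complete-intersection curve gives a smooth projective $C$ of genus $\ge 1$ with a non-constant morphism $C\to Y$, which lifts to a twisted morphism $f\colon C\to\cY$ from a smooth curve via the ramified-cover construction of \cite[Proof of Theorem 7.1.1]{AV02} recalled above. Since $-K_\cY$ is ample and $f$ is non-constant, $-K_\cY\cdot f_*C>0$. Next I would spread the datum $(\cY,\CL,C,f)$ out over a finitely generated $\bZ$-subalgebra $R\subset\bC$, obtaining reductions $(\cY_p,\CL_p,C_p,f_p)$ over the closed points of $\mathrm{Spec}\,R$. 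Because there are only finitely many strata $\cY_G$ with bounded stabilizer orders, for all but finitely many primes $p$ the reduction $\cY_p$ is a tame Deligne--Mumford stack in the sense of \cite{AV02}, hence a tame Fano orbifold in characteristic $p$.

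For such $p$ I would compose $f_p$ with the $e$-fold geometric Frobenius $F^e\colon C_p\to C_p$, which multiplies the class, $(f_p\circ F^e)_*C_p=p^e (f_p)_*C_p$, while keeping the genus fixed. Applying \eqref{eq-RR} to the smooth curve $C_p$ with one fixed point in the untwisted sector (so $Q=0$ and one subtracts $\dim\cY$) yields
$$\dim_{[f_p\circ F^e]}\Mor(C_p,\cY_p;(f_p\circ F^e)|_c)\ \ge\ p^e\bigl(-K_{\cY_p}\cdot (f_p)_*C_p\bigr)-g(C)\dim\cY\ \ge\ 1$$
once $e$ is large, the leading coefficient being positive. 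The preceding Proposition, valid in characteristic $p$ for tame stacks, then produces a twisted map $\bP^1(1,\ell)\to\cY_p$ with $d_{g^{-1}}(\cdot)\le\dim\cY+1=n$, a bound uniform in $p$.

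Finally I would descend by Mori's boundedness-plus-constructibility argument. The uniform bound $-K_{\cY}\cdot f_*\bP(1,\ell)\le d_{g^{-1}}\le n$ together with ampleness of $-K_\cY$ bounds the degree $f_*\bP(1,\ell)$; meanwhile $\ell$ divides one of finitely many stabilizer orders and $g$ ranges over the finite set $\pi_0(I\cY)$, so all these curves lie in finitely many components of the proper, finite-type moduli stack of twisted stable maps of \cite{AV02,AGV08} over $\mathrm{Spec}\,R$. Its image in $\mathrm{Spec}\,R$ is constructible by Chevalley and contains infinitely many closed points, hence contains the generic point, whose residue field has characteristic $0$; a point there, base-changed to $\bC$, gives the desired $f\colon\bP^1(1,\ell)\to\cY$ with $d_{g^{-1}}(f)\le n$. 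The main obstacle is making this descent rigorous in the stacky setting --- verifying that the relevant space of twisted stable maps is genuinely of finite type over the arithmetic base, that the degree, level, and sector data are bounded uniformly in $p$, and that Frobenius interacts correctly with the tame stack structure --- but each of these follows the template of \cite{KM98,Deb01} once one works with tame Deligne--Mumford stacks and the Abramovich--Vistoli compactification.
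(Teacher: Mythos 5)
Your proposal is correct and follows essentially the same route as the paper, whose own justification of this proposition is precisely Mori's characteristic-$p$ method --- spreading out over a finitely generated $\bZ$-algebra, composing with Frobenius to achieve $\dim_{[f]}\Mor(C,\cY;f|_c)\ge 1$, the two preceding bend-and-break propositions applied to tame Deligne--Mumford stacks, and constructible descent back to characteristic $0$ --- so your write-up simply supplies the standard details that the paper delegates to \cite{KM98,Deb01,CT09}. One caution on the descent step: ``infinitely many closed points'' alone does not force a constructible subset of $\mathrm{Spec}\,R$ to contain the generic point (they could all lie in a single fiber over $\mathrm{Spec}\,\bZ$); what your argument actually yields --- a nonempty fiber of the moduli stack over \emph{every} closed point of residue characteristic at least some $N$, which is a dense set of closed points --- is the statement that makes the Chevalley argument close, exactly as in \cite{KM98,Deb01}.
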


Combining this with the estimates \eqref{ineq-dg}-\eqref{ineq-mld}, we complete the proof of Theorem \ref{thm-main}. 

\begin{exmp}
    Consider the weighted projective line $\bP^1(2,3)=(\bC^2\setminus 0)/\bC^*$ where $\bC^*$ acts on $\bC^2$ by $\lambda\circ (z_1, z_2)=(\lambda^2 z_1, \lambda^3 z_2)$. 
    We have $I\cY=Y\sqcup \{([1,0]=0, -1)\} \sqcup \{([0,1]=\infty, \epsilon)\}\sqcup \{(\infty,\epsilon^{-1})\}$ with $\epsilon=e^{2\pi\sqrt{-1}/3}$.
    We consider the family of maps $f: \bC^*\times \bP^1(1,2)\rightarrow \cY$ given by
    $$f(t, [u_1,u_2])=f_t([u_1, u_2])= [t^2 u_1^2+u_2,  t^3 u_1^3+ u_1u_2].$$ 
    Then $\ev_0(f_t)=[t^2, t^3]=[1,1]$ and $\ev_\infty(f_t)= \{([1,0],-1)\}$ and $d_{-1}(f)=-K_{\cY}\cdot f_*C+\age(-1)=5\cdot  \frac{1}{2}+\frac{1}{2}=3>2= \dim \cY+1$. 
    Note that $f$ is undefined at $(0, [1,0])$. Indeed, in a chart near $0=[1,0]$, we set $v=u_2/u_1^2$ to get $f(t,v)=[t^2+v, t^3+v]$. We can now resolve the indeterminacy by a weighted blowup of weight $(1,3)$ to map $\hat{f}: \hat{\bC}^2:=Bl_{(1,3)}\bC^2\rightarrow Y$. Denote the exceptional divisor by $E_1\cong \bP^1(1,3)$, the strict transform of $\{t=0\}$ by $E_0$ and the strict transform of $\{v=0\}$ by $H$. Near $E_1\cap E_0$ we have a chart $\bC^2/\frac{1}{3}(1,-1)\mapsto \hat{\bC}^2$ given by $(x,w)\mapsto (xw, w^3)$ such that $E_0=\{x=0\}, E_1=\{w=0\}$
    and $$\hat{f}(x, w)=f(xw, w^3)=[x^2+w, x^3+1]. $$
    Note that when $x=0$, we get $\hat{f}(0,w)=[w,1]$ which can be seen as the restriction of identity self-map of $\bP^1(2,3)$.
    Near $E_1\cap H$ we have a chart given by $(t,y)\mapsto (t, t^3 y)$ such that $E_1=\{t=0\}$ and $$\hat{f}(t, y)=f(t, t^3y)=[1+ty, 1+y].$$ 
    Note that when $t=0$, we get $\hat{f}(0,y)=[1,1+y]$ which can be seen as the restriction of the map $\hat{f}: \cE=\bP^1(1,3)\rightarrow \bP^1(2,3)$ given by $[y_1, y_2]\rightarrow [1, 1+y_2/y_1^3]=[y_1^2, y_1^3+y_2]$. % and satisfies $\ev_\infty(\hat{f})\in ([0,1],\epsilon^{-1})$ and $d_{\epsilon}(\hat{f})=5\cdot \frac{1}{3}+\frac{1}{3}=2=\dim \cY+1$. 
    
    So we indeed bend-and-break $f$ to a twisted map $f': \bP^1(1,3)\cup_{[0,1]_1=[0,1]_2} \bP^1(2,3)\rightarrow \bP^1(2,3)$ and by restriction get the above twisted map $\hat{f}: \cE=\bP^1(1,3)\rightarrow \bP^1(2,3)$ that satisfies $\ev_\infty(\hat{f})\in ([0,1],\epsilon)$ and $d_{\epsilon^{-1}}(\hat{f})=5\cdot \frac{1}{3}+\frac{1}{3}=2=\dim \cY+1$. 
\end{exmp}
\begin{exmp}
    Consider $\bP^1(2,3,5)=(\bC^3\setminus 0)/\bC^*$ where $\bC^*$ acts on $\bC^3$ by $\lambda\circ (z_1, z_2, z_3)=(\lambda^2 z_1, \lambda^3 z_2, \lambda^5 z_3)$. There is a family $\{f_t: \bP^1(1,3)\rightarrow \cY\}_{t\in \bC^*}$ of twisted map with fixed $f_t(0)$ and $f_t(\infty)$ given by: $$[u_1, u_2]\rightarrow [t^2 u_1^2,\; t^3 u_1^3+u_2,\; t^5 u_1^5+u_1^2 u_2].$$ We have $d_{\epsilon^{-1}}(f_t)=10\cdot \frac{1}{3}+\frac{1}{3}+\frac{1}{3}=4>3=\dim \cY+1$. Note that $f_t([1,0])$ is not defined at $t=0$ and one can bend-and-break $f_t$ to $f': \bP^1(1,5)\cup_{[0,1]_1=[0,1]_2} \bP^1(3,5)\rightarrow \bP^2(2,3,5)$ and get a new twisted map $\hat{f}: \bP^1(1,5)\rightarrow \bP^2(2,3,5)$ given by $[y_1, y_2]\mapsto [y_1^2, y_1^3, y_1^5+y_2]$ with $d_{e^{-2\pi\sqrt{-1}/5}}(\hat{f})=10\cdot \frac{1}{5}+\frac{3}{5}+\frac{2}{5}=3=\dim \cY+1$. 
\end{exmp}
Based on our discussion and examples above, we propose a conjectural characterization for weighted projective spaces as Fano orbifolds: 
\begin{conj}\label{conj-WP}
    Let $\cY$ be a Fano orbifold of dimension $n$. Assume that for any $\ell\in \bZ_{>0}$, $g\in \pi_0(I\cY)$ and any $f\in \Mor_{(1,g)}(\bP^1(1, \ell), \cY)$ we have $d_{g^{-1}}(f)=-K_{\cY}\cdot f_*\bP^1(1, \ell)+\age(g^{-1}) \ge n+1$.  Then $\cY$ is isomorphic to a finite quotient of a weighted projective space. %, which can be recovered as follows.     Assume also that for some ${f}\in \Mor_{(1, {g})}(\bP^1(1, {\ell}), \cY)$ the equality is satisfied. Then the weighted projective space is equal to $\bP(a_1, a_2, \dots, a_{n+1})$ with $a_1\le a_2\le \cdots\le a_{n+1}$ where $a_{n+1}=|{g}|=\hat{\ell}$ and $\{\overline{a_i({g})}=a_i({g})\mod {\ell}: 1\le i\le n\}$ are the weights for $\bZ_{{\ell}}$-action on ${f}^*T\cY|_{\infty}$. 
\end{conj}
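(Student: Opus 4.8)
The plan is to transplant the Mori–Mukai characterization of projective space, in the sharp form of Cho–Miyaoka–Shepherd-Barron and Kebekus, to the setting of Deligne–Mumford stacks. The numerical hypothesis is designed to pin the minimal twisted rational curves down exactly: the last existence proposition above produces, on any Fano orbifold, a twisted map $f\colon\bP^1(1,\ell)\to\cY$ with $f(0)\in\cY_1$, $f(\infty)\in\cY_g$ and $d_{g^{-1}}(f)\le\dim\cY+1=n+1$, while the standing assumption gives $d_{g^{-1}}(f)\ge n+1$; together these force $d_{g^{-1}}(f)=n+1$ for a minimal curve. This equality is the orbifold analogue of the statement that minimal rational curves on $\bP^n$ have anticanonical degree exactly $n+1$, which is what distinguishes projective space among Fano manifolds. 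Fixing a general orbifold point $p\in\cY$ in the untwisted sector, I would first assemble a covering family $\mathcal{K}$ of such minimal twisted curves with $f(0)=p$, and use the bend-and-break Proposition~\ref{prop-bab1} together with the additivity of anticanonical degree under degeneration to conclude that every member of $\mathcal{K}$ is \emph{unbreakable}: it cannot split, with $p$ held fixed, into a connected union of lower-degree twisted curves, since each piece would again have to satisfy the lower bound $n+1$.

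The second step is the infinitesimal analysis at $p$. Because each curve has exactly one nontrivial orbifold point, at $\infty$, the pullback $f^*T\cY$ splits as a direct sum of orbifold line bundles over $\bP^1(1,\ell)$ (the splitting \eqref{eq-split}); the equality $d_{g^{-1}}(f)=n+1$ and the freeness of $f$ at $p$ should then force this splitting type to be as balanced and positive as possible, making the obstruction space $H^1(\bP^1(1,\ell), f^*T\cY(-\{0\}-\{\infty\}))$ vanish and the family $\mathcal{K}$ unobstructed of the expected dimension. One then shows that the associated variety of minimal rational tangents at $p$, the locus of tangent directions of members of $\mathcal{K}$ through $p$, is the entire (weighted) projectivized tangent space at $p$, and that the total evaluation map from the universal curve over $\mathcal{K}$ is dominant. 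This is precisely the point at which ampleness of $T\cY$ is invoked in Theorem~\ref{thm-main2}; under the weaker numerical hypothesis its role must be taken over by the degree equality $d_{g^{-1}}=n+1$ forcing the splitting type.

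The final step is to promote this local and infinitesimal data to a global identification. In the smooth case the Cho–Miyaoka–Shepherd-Barron criterion concludes $\cY\cong\bP^n$ from the fact that two general points are joined by a minimal rational curve and that the variety of minimal rational tangents is the full projective space. In the stacky setting I would argue that the same flow of unbreakable minimal curves exhibits the coarse space, away from the branch loci $\cY_G$, as swept out by a single $\bP^n$-like family, and that the orbifold weights carried at the marked points $\infty$ of these curves assemble, after passing to a uniformizing finite cover, into the single $\bC^*$-weighting of a weighted projective space; the required finite quotient then arises as the deck group of this uniformization.

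The hardest part is this last step, the orbifold Cho–Miyaoka–Shepherd-Barron characterization. Passing from ``through a general point there is a full, unbreakable family of minimal twisted curves of maximal anticanonical degree'' to the global conclusion that $\cY$ is a finite quotient of a weighted projective space requires controlling how the single orbifold point of each curve varies in families and how the resulting local weights glue into one global weighting. Unlike the ample case of Theorem~\ref{thm-main2}, where positivity of $T\cY$ trivialises the deformation theory everywhere and rigidifies the variety of minimal rational tangents, the purely numerical hypothesis gives no a priori control of the obstruction spaces away from the curves already produced; establishing genuine uniformization by a weighted projective space, rather than merely a dominant rational map onto one, is exactly the content that keeps the statement at the level of a conjecture.
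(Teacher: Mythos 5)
This statement is posed in the paper as a \emph{conjecture}: the paper offers no proof of it, explicitly says that it expects ``a careful implementation of the argument from \cite{CMS02, Keb02} in the orbifold setting'' would prove it, and leaves that to future study, carrying out the argument only in the special case of ample orbifold tangent bundle (Theorem~\ref{thm-main2}). Your proposal follows exactly the strategy the paper anticipates, and to your credit you concede in the final paragraph that the decisive step is missing; so what you have written is a research plan, not a proof, and it should be assessed as such. The conjecture remains open after your proposal.

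Beyond the missing final uniformization step that you yourself flag, two intermediate steps have concrete gaps. First, your unbreakability argument does not go through as stated: when a twisted curve $f\colon\bP^1(1,\ell)\to\cY$ breaks via Proposition~\ref{prop-bab1}, the limiting components acquire orbifold structure \emph{at the nodes} (this is exactly the content of the valuative-criterion argument of \cite{AV02} used in the paper), so the pieces are in general twisted curves with two orbifold points, e.g.\ of the form $\bP^1(\ell_1,\ell_2)$ --- see the paper's first example, where the break produces $\bP^1(1,3)\cup\bP^1(2,3)$. The hypothesis of the conjecture constrains only maps from $\bP^1(1,\ell)$, hence says nothing about such pieces; moreover $d_{g^{-1}}(f)=-K_\cY\cdot f_*\cC+\age(g^{-1})$ is not additive under degeneration, since the age term is attached to the marked point $\infty$ rather than to the curve, and new age contributions appear at the nodes. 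Second, the claim that the degree equality $d_{g^{-1}}(f)=n+1$ ``should force'' the splitting \eqref{eq-split} to be positive and balanced is unsubstantiated: in the proof of Theorem~\ref{thm-main2} the inequality $b_i\ge 1$ comes from ampleness of $T\cY$, not from the numerical hypothesis, and without it the summands of $f^*T\cY$ could a priori be negative (the curve need not be free), which would invalidate both the vanishing of $H^1$ and the construction of the tangent map. Replacing ampleness by the purely numerical condition at precisely these two points is the substance of the conjecture, and your proposal, like the paper, leaves it open.
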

Note that this conjecture would imply that the equality in Theorem \ref{thm-main} holds only for smooth points. Indeed, the quotient orbifold $\cY=(X\backslash o)/\bC^*$ is a quotient of a weighted projective space if the equality in \Cref{thm-main} holds. As a consequence $(X,o)$ is an isolated quotient singularity, for which we know that the minimal discrepancy is equal to the dimension only when it is a smooth point. 

In the smooth case, the above conjecture is essentially the conjecture of Mori-Mukai as proved by Cho-Miyaoka-Shepherd-Barron (\cite{CMS02}, see also \cite{Keb02, Cheng17}). We expect that a careful implementation of the argument from \cite{CMS02, Keb02} in the orbifold setting will prove such a statement. We leave this to a future study and carry out such arguments here for Fano orbifolds with ample orbifold tangent bundles. %For evidence, we will provide an orbifold version of Mori's theorem of characterizing the projective space as the smooth projective manifold with ample tangent bundles. 
%\begin{thm}
   % Let $\cY$ be a complex orbifold whose orbifold tangent bundle $T \cY$ is ample. Then $\cY$ is a finite quotient of weighted projective space. 
%\end{thm}
\begin{proof}[Proof of Theorem \ref{thm-main2}]
    We sketch a proof that is modeled on Mori's proof in the case of smooth projective manifolds, which depends on the study of the tangent map of the rational curves at a fixed point. 
    Let $S$ be an irreducible component of $\Mor_{(1,g)}(\bP^1(1, \ell), \cY;f|_{\{\infty\}})$ that satisfies $$d_{g^{-1}}(f)=\chi(f^*T\cY)-\dim \cY_g=n+1,$$
    and $\Mor_{(1,g)}(\bP^1(1, \ell), \cY; f|_{\{\infty\}})/G$ is proper where $G$ is the automorphism of $\bP(1,\ell)$ fixing $\infty$.
    Since $\bP(1,\ell)$ has only one orbifold point, by \cite[Variation 2]{MT12}, we have a splitting
    \begin{equation}\label{eq-split}
        f^* T\cY=\cO(b_1)\oplus \cdots \oplus \cO(b_n). 
    \end{equation}
    %Let $m$ be the order of the stabilizer $G_{f(\infty)}$ of $f(\infty)\in \cY$. % with weight $(c_1,\ldots,c_n)$. 
    Let $\{a_i\in [1,\ell]\}$ be the weights of $\bZ_\ell\hookrightarrow G_{f(\infty)}$ on $f^*T\cY|_\infty$ satisfying $a_1\le a_2\le \cdots \le a_n$. Then we have $a_i\equiv b_i \mod \ell $.  %\equiv c_i \mod \ell$. 
    By the ampleness of $T\cY$, we have $b_i\ge 1$. By Bochner-Kodaira's vanishing, we have 
    \begin{equation}\label{eq-vob}
        H^1(\bP^1(1,\ell), \cO(b_i-k))=0, \text{ for } k<1+\ell+b_i.
    \end{equation}    
    By the Riemann-Roch for orbifold line bundles, we have
   $$\chi(\cO(b_i))=1+\frac{b_i}{\ell}-\frac{b_i\mod \ell}{\ell}\ge 1$$ 
   and the equality holds if and only if $b_i\in [1, \ell-1]$. 

   Assume $\dim \cY_g=d$. Then for $j\ge n-d+1$,  $a_j(g)=\ell$ and $b_j\equiv 0\mod \ell$. We have
   \begin{equation*}
       n+1=\chi(f^*T\cY)-\dim \cY_g=\sum_{j=1}^{n-d} (1+\frac{b_i-b_i\mod \ell}{\ell})+\sum_{j=n-d+1}^n \frac{b_j}{\ell}. 
   \end{equation*}
   Since each term is an integer at least 1, there must exist $i\in \{1,\dots, n\}$ such that:
    \begin{equation*}
        b_i=a_i+\ell, \quad \text{ and } \quad b_j=a_j \text{ for } j\neq i. 
    \end{equation*}
    In other words, we have a splitting 
    \begin{equation}\label{eqn:splitting}
        f^*T\cY=\cO(a_1)\oplus\cdots\oplus \cO(a_{i-1})\oplus \cO(a_{i}+\ell)\oplus \cO(a_{i+1})\cdots \oplus \cO(a_n).
    \end{equation}
    The differential $df$ corresponds to a section of 
    \begin{equation}\label{eqn:tangent_splitting}
        \cO(-(1+\ell))\otimes f^*T\cY
    =\cO(a_1-\ell-1)\oplus\cdots\oplus \cO(a_{i-1}-\ell -1)\oplus \cO(a_{i}-1)\oplus \cO(a_{i+1}-\ell-1)\cdots \oplus \cO(a_n-\ell-1)
    \end{equation}
    which is equivalently a section of $H^0(\bP^1(1,\ell), a_i-1)\cong \bC$ since $\cO(a_j-\ell-1)$ with $a_j-\ell-1<0$ has no holomorphic sections. 
    Locally $f:\bC/\frac{1}{\ell}(1) \to \bC^n/G_{f(\infty)}$ factors through $\bC^n/\bZ_{\ell}$ and has a lifting to $\bC\rightarrow \bC^n$ whose leading order terms are given as:
    \begin{equation*}
        f(z)\sim (t_1 z^{a_1}+t'_1 z^{a_1+\ell}, \dots, t_i z^{a_i}+t'_i z^{a_i+\ell}, \dots, t_n z^{a_n}+t'_nz^{a_n+\ell}). 
    \end{equation*}
    Then $(t_1,\dots, t_n)\ne 0$, for otherwise, $\rd f$ will have a zero of order $\ge \ell$ in a component of the splitting \eqref{eqn:tangent_splitting}, contradicting with the splitting. Then we can define a tangent map:
    \begin{equation*}
        \tilde{\phi}(f)=(t_1,\dots, t_n)\in \bC^n/\frac{1}{\ell}(a_1,\dots, a_n)
    \end{equation*}
    and the corresponding projective tangent map $\phi: S\rightarrow \bP_{\mathbf{w}}$ where $\mathbf{w}=(a_1, \dots, a_n)$. 
    Since the infinitesimal variation of $f$ with a fixed $\tilde\phi(f)=(t)$ correspond to variation of $t'_i$, the fiber of $\tilde{\phi}$ over $[c]\neq 0$ has an orbifold tangent space and an obstruction space given by $H^j(\bP^1(1,\ell), \cO\oplus \cO(-\ell)^{n-1})$ which is equal to $\bC$ for $j=0$ and vanishes for $j=1$. 
    
    The automorphism group $G$ of $\bP(1,\ell)$ fixing $\infty$ is given by $\{[z,w]\mapsto [az,bz^\ell+cw]|a,c\ne 0 \}$ where $\infty=[0,1]$. The group $G$ acts on $S\times \bP^1(1, \ell)$ and we take the quotient $\cM=S/G$ as an orbifold. The tangent space of $S$ at $f$ is given by  $H^0(\bP^1(1,\ell), \cO^{n-1}\oplus \cO(\ell) )\cong \bC^{n+1}$ and tangent of the $G$-action is $H^0(\bP(1,\ell),\cO(\ell))$, hence the tangent space of $\cM$ at $[f]$ is  $H^0(\bP^1(1,\ell), \cO^{n-1})\cong \bC^{n-1}$. It is straightforward to check that the induced smooth orbifold morphism
    $\bar{\phi}: \cM\rightarrow \bP_{\mathbf{w}}$ is finite of maximal rank and inject  the stabilizer of $[f]$ into the  the stabilizer of $\bar{\phi}([f])$, i.e.\ $\bar{\phi}$ is representable. 
    In particular, $\bar{\phi}$ is an orbifold covering in the sense of \cite[Definition 2.16]{Ruanbook}, therefore we conclude that $\cM$ is a weighted projective space of weight $\bar{\mathbf{w}}=\mathbf{w}/k$ where $k$ is a divisor of $\mathrm{gcd}(\bar{\mathbf{w}}):=\mathrm{gcd}(a_1, \dots, a_n)$ such that $\mathrm{gcd}(\bar{\mathbf{w}})=\frac{\mathrm{gcd}(\mathbf{w})}{k}$ is equal to the order of the stabilizer at the generic point of $\cM$. 
    
    Note that locally near $f(\infty)$, $\cY\cong \bC^n/G_{f(\infty)}$ %is modeled on $\bC^n/\frac{1}{m}(c_1,\dots,c_n)$, which 
    is a quotient of $\cY':=\bC^n/\frac{1}{\ell}(a_1, \dots, a_n)$. Let $\mu': \hat{\cY}'\rightarrow \cY'$ denote the weighted blowup of $\cY'$ at $[0]$ with the weight $\mathbf{w}$ and $\mu: \hat{\cY}\rightarrow \cY$ denote the weighted blow-up at $f(\infty)$ with weight $\mathbf{w}$. Denote by $E'$ and $E$ the exceptional divisors of $\mu'$ and $\mu$ respectively. Then $E'\cong \bP_{\mathbf{w}}$ with $\cO(E')|_{E'}=\cO_{\bP_{\mathbf{w}}}(-\ell)$.    
    The exceptional divisor $E$ is thus a finite quotient of weighted projective space $\bP_{\textbf{w}}$ and  the orbifold line bundle $\cO(E)|_E$ is the quotient of $\cO_{\bP_\mathbf{w}}(-\ell)$. 

    Consider the evaluation map $\Phi: S\times \bP^1(1, \ell)\rightarrow \cY$ such that $\Phi|_{\{s\}\times \bP(1, \ell)}$ is the morphism represented by $s$.  We have an induced orbifold fibration $\cZ=(S\times \bP^1(1,\ell))/G\rightarrow \cM\cong \bP_{\bar{\mathbf{w}}}$ which is an orbifold $\bP(1,\ell)$ fibration with a section $\sigma: \cM\rightarrow \cZ$ by $\sigma([f])=[(f,\infty)]$. The fiber of $\Phi|_{S\times (\bP(1,\ell)\backslash\{\infty\})}$, at $(f,q)$, has an orbifold tangent space $H^0(\bP(1,\ell),\cO(-\ell)^{n-1}\oplus \cO)\oplus \bC$ and the obstruction vanishes, where the $\bC$ summand is from the tangent of $q\in  \bP(1,\ell)\backslash \{0\}$ and the first component of is the subspace of the tangent space of $S\simeq H^0(\bP(1,\ell),\cO^{n-1}\oplus \cO(\ell))$ at $f$ subject to the condition that it vanishes at the smooth point $q\in \bP(1,\ell)$. As the tangent space of the fiber is the tangent of the $G$ action, the induced morphism $\bar{\Phi}: \cZ\to \cY$ is of maximal rank restricted to $\cZ\backslash\sigma(\cM)$. We claim the orbifold morphism $\bar{\Phi}|_{\cZ\backslash \sigma(\cM)}$ is also representable. If $g\in G$ fixes $(f,0)$, then on $\bC/\frac{1}{\ell}(1)$ neighborhood around $\infty \in \bP(1,\ell)$, $g$ is written as $z\mapsto \lambda z$ such that $\lambda \sim \lambda e^{2\pi \sqrt{-1}/\ell}$ if $f\circ g = f$ near $0$ in $\bC/\frac{1}{\ell}(1)$. Hence the stabilizer $G_{[(f,0)]}$ is a cyclic group. If $G_{[(f,0)]}$ does not inject into the stabilizer of $f(0)$ in $\cY$ with a kernel $\bZ_s$, then $f$ is a branch cover over a twisted map $f':\bP(1,\ell')\to \cY$ of order $s$, where $\ell$ divides $\ell'$ and $\ell'/\ell$ divides $s$. As a consequence, we have 
    $$(f')^*T\cY=\cO\left(\frac{a_1\ell'}{s\ell}\right)\oplus \dots \oplus \cO\left(\frac{a_{i-1}\ell'}{s\ell}\right)\oplus \cO\left(\frac{a_{i}\ell'}{s\ell}+\frac{\ell'}{s}\right)\oplus  \cO\left(\frac{a_{i+1}\ell'}{s\ell}\right)\oplus \dots \oplus  \cO\left(\frac{a_{n}\ell'}{s\ell}\right),$$
    over $\bP(1,\ell')$. All the weights above are no larger than $2\ell'/s\le \ell'$. Since $\chi((f')^*T\cY)-\dim \cY_{g'}\ge n+1$ where $f'(\infty)\in \cY_{g'}$, we can argue as in \eqref{eqn:splitting} that at least one weight is larger than $\ell'$, contradiction.

    The evaluation map $\widetilde{\Phi}: S\times \bP^1(1, \ell)\dashrightarrow \hat \cY$ is well-defined near $S\times \{\infty\}$ and the induced map $\bar{\widetilde{\Phi}}:\cZ\dashrightarrow \hat \cY$ near $\sigma(\cM)$ is a covering map. Therefore the orbifold normal bundle of $\sigma(\cM)$ is isomorphic to $\cO_{\bP_{\bar{\mathbf{w}}}}(-\ell)$. As a consequence, we have $\cZ=\bP(\cO_{\bP_{\bar{\mathbf{w}}}}(-\ell)\oplus \cO)$. Then by blowing down $\sigma(\cM)$ in $\bar{\Phi}$, we get an orbifold morphism $\bP(\bar{\mathbf{w}},\ell)\to \cY$ that is smooth and representable, and hence an orbifold covering. 

    \begin{equation*}
    \xymatrix %@R=1pc @C=0.7pc
   {
    & S\times \{\infty\} \ar[ldd]_{\phi} \ar@{^{(}->}[r] \ar[d]^{\bullet/G} & S\times \bP^1(1,\ell) \ar[d]^{\bullet/G} & \\
    &\bP_{\bar{\mathbf{w}}}\cong \sigma(\cM) \ar[dl]^{\bar{\phi}} \ar[d]^{} \ar@{^{(}->}[r] & \cZ \ar[dr]^{\bar{\Phi}}\ar[r] \ar@{-->}[d]^{\bar{\widetilde{\Phi}}}&  \bP(\bar{\mathbf{w}}, \ell) \ar[d] \\
    \bP_{\mathbf{w}}\cong E' \ar[r] & E \ar@{^{(}->}[r] & \hat{\cY} \ar[r]^{\mu} & \cY 
   }
\end{equation*}
\end{proof}

\section{Relation with moduli space in symplectic field theory}\label{sec-symp}
In \cite[Proposition 3.4]{LZ24}, we also derived the mld in terms of the lSFT invariant from symplectic geometry
\begin{equation}\label{eqn:mld_SFT}
    \mld(o, X)=\frac{1}{2}\inf_{\gamma}\lSFT(\gamma)+1
\end{equation}
where $\gamma$ ranges over all Reeb orbits of the link of $x\in X$ with respect to any conic contact form. We refer to \cite[\S 2.3]{LZ24} for Conley-Zehnder indices and SFT degrees.

Let $\frg=(g_1,\ldots, g_k)$ be a sequence of elements in $\pi_0(I\cY)$. We define $\moduli_{0,k}(\cY, A,\frg)$ to be the compactified moduli spaces of orbifold $\bP^1$ curves with $k$ marked points (only marked points can be orbifold points) to $\cY$ with homology class $A$, such that $i$th marked point is mapped to $\cY_{g_i}$. The order of the $i$th marked point is the order of $g_i$, hence the associated orbifold map $(\bP^1,Q)\to \cY$ is representable as in \cite{AGV08}. Then the complex virtual dimension of  $\moduli_{0,k}(\cY, A,\frg)$ is given by \cite[Theorem A]{CR02}
$$\langle c_1(T\cY),A\rangle+\dim_{\bC}(\cY)+k-3-\sum_{i=1}^k \age(g_i).$$
This is equivalent to the right-hand-side of \eqref{eq-RR} with $k-3$ from the freedom of marked points and automorphisms and without $\sum_{i=1}^k\dim_{\bC}\cY_{g_i}$ as we do not require marked points going through fixed points in $\cY_{g_i}$. As in \S \ref{s2}, we will be mainly interested in the case with one marked point. We use $\moduli^{\bullet}_{0,1}(\cY,A,g)$ to denote the compactified moduli space of orbifold $\bP^1(1,\ell)$ in $\cY$ of homology class $A$ with the order $\ell$ marked point mapped to a chosen point on $\cY_g$, where the order of $g$ is $\ell$. This is the compactification of the Deligne–Mumford stack $\Mor(\bP^1(1,\ell),\cY;f|_\infty)/\Aut(\bC)$. The virtual dimension is
$$\langle c_1(T\cY),A\rangle+\dim_{\bC}\cY-2-\age(g)-\dim_{\bC}\cY_g,$$
which is smaller than \eqref{ineq-dg} by $2$ from the automorphism of $\bP(1,\ell)$.

We have a correspondence between $\pi_0(I\cY)$ with the connected components of the Morse-Bott family or Reeb orbits with period at most the principle orbit. We may rescale the contact form, such that the principle orbit (the simple Reeb orbit over a non-singular point of $M/S^1$) has period $1$. For $g\in \pi_0(I\cY)$, we use $\gamma_g$ to represent the corresponding Reeb orbit. We use $\gamma_{g,l}$ to denote the Reeb orbit $\gamma_g$ followed by $l\in \mathbb{N}$ principle orbits. They form all the Reeb orbits on $M$. Let $W$ be a strong symplectic (orbifold) filling of $M$, we use $\moduli_{\SFT}(W,A,\gamma_{g,l})$ to denote the compactified moduli spaces of holomorphic planes in the completion $\widehat{W}$ of homology class $A$ and one positive puncture asymptotic to $\gamma_{g,l}$, see e.g.\cite[\S 1.5]{SFT}. The \emph{real} virtual dimension of $\moduli_{\SFT}(W,A,\gamma_{g,l})$ is given by \cite[Proposition 1.7.1
]{SFT} (strictly speaking, we are considering the Morse-Bott case with the holomorphic curve asymptotic to a fixed orbit in the family, therefore we use $\mu_{\LCZ}$)
\begin{equation}\label{eqn:SFT}
    \mu_{\LCZ}^A(\gamma_{g,l})+\dim_{\bC}W-3,
\end{equation}
where $\mu_{\LCZ}^A(\gamma_{g,l})$ is the lower semi-continuous Conley-Zehnder index of $\gamma_{g,l}$ using trivializations of $\det_{\bC}u^*TW$ such that $u:\bD\to W$ is a continuous (orbifold) map with boundary mapped to $\gamma_{g,l}$ with homology class $A$. In our case, the unit disk bundle $D(\CL^{-1})$ in $\CL^{-1}$ is naturally a symplectic orbifold filling of $M$ and we have the following.

\begin{prop}\label{prop:SFT}
    Let $\gamma$ be a Reeb orbit and $u$ a disk in $D(\CL^{-1})$ with boundary $\gamma_{g,l}$, whose intersection number with the zero section is zero. Then we have $\lSFT(\gamma_{g,l}) = \vdim_{\bR} \moduli_{\SFT}(D(\CL^{-1}),[u],\gamma_{g,l})$.
\end{prop}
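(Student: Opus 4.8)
The plan is to observe that, after inserting the correct dimension of the filling, both sides of the asserted equality are the \emph{same} Conley--Zehnder index shifted by the \emph{same} constant, so that the whole proposition collapses to a comparison of two trivializations of $\det_\bC TW$ along $\gamma_{g,l}$. First I recall from \cite[\S 2.3]{LZ24} that $\lSFT(\gamma_{g,l})$ is, by definition, the lower SFT degree of the Reeb orbit computed with the intrinsic canonical trivialization $\tau_{\mathrm{can}}$ of the contact manifold $M$, i.e. $\lSFT(\gamma_{g,l}) = \mu^{\mathrm{can}}_{\LCZ}(\gamma_{g,l}) + n - 3$, where $2n-1 = \dim_{\bR} M$. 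On the other side, $W = D(\CL^{-1})$ is the unit disk bundle of a line bundle over the $(n-1)$-dimensional orbifold $\cY$, so $\dim_\bC W = n$ and the dimension formula \eqref{eqn:SFT} reads $\vdim_\bR \moduli_{\SFT}(D(\CL^{-1}),[u],\gamma_{g,l}) = \mu^{[u]}_{\LCZ}(\gamma_{g,l}) + n - 3$, where the index is now taken with respect to the trivialization induced by the capping disk $u$. Hence it suffices to prove the single identity $\mu^{[u]}_{\LCZ}(\gamma_{g,l}) = \mu^{\mathrm{can}}_{\LCZ}(\gamma_{g,l})$.

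The heart of the argument is a Chern class computation identifying $\tau_{\mathrm{can}}$ with the trivialization induced by a zero-intersection disk. The projection $\pi : D(\CL^{-1}) \to \cY$ with a Hermitian connection splits $TW \cong \pi^* T\cY \oplus \pi^*\CL^{-1}$, so $c_1(TW) = \pi^*\bigl(c_1(K_\cY^{-1}) + c_1(\CL^{-1})\bigr)$; since $-K_\cY = r\CL$ this gives $c_1(TW) = (r-1)\,\pi^* c_1(\CL)$. Under $H^2(D(\CL^{-1})) \cong H^2(\cY)$ the Poincar\'e dual of the zero section equals the Euler class $c_1(\CL^{-1}) = -\pi^* c_1(\CL)$, so
$$c_1(TW) = -(r-1)\,\mathrm{PD}[\cY_0].$$
For a capping disk $u$ of $\gamma_{g,l}$, the difference $\mu^{[u]}_{\LCZ}(\gamma_{g,l}) - \mu^{\mathrm{can}}_{\LCZ}(\gamma_{g,l})$ equals twice the relative first Chern number of $(TW,\tau_{\mathrm{can}})$ on $u$. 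Because the restriction of $\pi^* c_1(\CL)$ to the circle bundle $M \to \cY$ vanishes by the Gysin sequence, $\tau_{\mathrm{can}}$ extends as a trivialization of $\det_\bC TW$ over the entire complement $D(\CL^{-1})\setminus \cY_0$; hence this relative Chern number localizes on the zero section and equals $-(r-1)\,[\cY_0]\cdot u$. The hypothesis $[\cY_0]\cdot u = 0$ makes the correction vanish, giving $\mu^{[u]}_{\LCZ}(\gamma_{g,l}) = \mu^{\mathrm{can}}_{\LCZ}(\gamma_{g,l})$ and hence the proposition.

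The main obstacle is the orbifold bookkeeping underlying this clean picture. One must make sense of the $\bQ$-valued intersection number $[\cY_0]\cdot u$ and of the orbifold first Chern pairing for an \emph{orbifold} capping disk, since $\gamma_{g,l}$ carries the isotropy datum $g \in \pi_0(I\cY)$ and near the asymptotic orbit the fibers of $\pi$ are local quotients $\bC/\tfrac1m(\cdots)$; in particular $u$ is a teardrop-type orbifold disk rather than a smooth one. The delicate point is to verify that the age-type contributions to the orbifold Conley--Zehnder index are recorded identically in $\mu^{\mathrm{can}}_{\LCZ}$ and in $\mu^{[u]}_{\LCZ}$ --- which holds because both are built from the same Morse--Bott/inertia description of $\gamma_{g,l}$ recalled above --- and that the proportionality $c_1(TW) = -(r-1)\,\mathrm{PD}[\cY_0]$ together with the vanishing of the rational intersection number survives passage to the orbifold setting, so that no fractional correction term is left over.
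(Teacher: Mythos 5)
Your proof is correct and takes essentially the same approach as the paper: both reduce the proposition to showing that the capping-disk trivialization and the intrinsic rational trivialization of $\det_\bC TW$ along $\gamma_{g,l}$ give the same Conley--Zehnder index, via the fact that $c_1(TD(\CL^{-1}))$ is Lefschetz dual to a multiple of the zero section, so the correction term is proportional to $[\cY_0]\cdot u$ and vanishes by hypothesis. The only difference is that you derive this duality explicitly from the splitting $TW\cong\pi^*T\cY\oplus\pi^*\CL^{-1}$ and $-K_\cY=r\CL$, whereas the paper cites \cite[Lemma 3.1]{LZ24} for it.
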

\begin{proof}
    By definition $\lSFT(\gamma_{g,l})=\mu^{\bQ}_{\LCZ}(\gamma_{g,l})+\dim_{\bC} D(\CL^{-1})-3$, where $\mu^{\bQ}_{\LCZ}(\gamma_{g,l})$ is computed using a trivialization of $\det_{\bC} \oplus^N \xi$ for the contact structure $\xi$ of $M$ and some $N\in \bZ_{>0}$. By \cite[Lemma 3.1]{LZ24}, as $c_1(D(\CL^{-1}))$ viewed in $H^2(D(\CL^{-1}),M;\bQ)$ is Lefschetz dual to a multiple of the fundamental class of the zero section, we have $\la c_1(D(\CL^{-1})), [u]\ra=0$ and $\lSFT(\gamma_{g,l}) = \mu^u_{\LCZ}(\gamma_{g,l})+\dim_{\bC} D(\CL^{-1})-3 = \vdim_{\bR} \moduli_{\SFT}(D(\CL^{-1}),[u],\gamma_{g,l})$.
\end{proof}

We restate \eqref{ineq-mld} as follows and reprove it using \eqref{eqn:mld_SFT} as the definition of mld. The proof is disguised as degenerating the SFT curves in $D(\CL^{-1})$ into fiber holomorphic disks and holomorphic curves contained in the zero section $\cY_0$, hence we can relate their virtual dimensions. However, as we only care about the virtual dimension, we do not require such degeneration actually happens. One can verify the formulae in the proof directly using the virtual dimension formulae without appealing to the degeneration picture. 
\begin{prop}\label{prop:md_vd}
    Let $(X,o)$ be a $n$-dimensional Fano cone singularity over Fano orbifold $\cY$ with associated ample line bundle $\CL$. If $\moduli^{\bullet}_{0,1}(\cY, A, g)\ne \emptyset$, we have 
    $$\mld(o,X) \le \vdim_{\bC} \moduli^{\bullet}_{0,1}(\cY, A, g)+2.$$
\end{prop}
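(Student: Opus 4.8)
The plan is to deduce the inequality from the symplectic formula \eqref{eqn:mld_SFT} together with Proposition \ref{prop:SFT}, matching the two virtual dimensions through the neck-stretching degeneration picture. Since \eqref{eqn:mld_SFT} expresses $\mld(o,X)$ as an infimum over \emph{all} Reeb orbits, it suffices to exhibit a single orbit $\gamma_{g,l}$ and a capping disk $u$ for which $\frac12\lSFT(\gamma_{g,l})+1 \le \vdim_{\bC}\moduli^\bullet_{0,1}(\cY,A,g)+2$. The nonemptiness hypothesis supplies a representable orbifold curve $f\colon\bP^1(1,\ell)\to\cY$ through $\cY_g$ in class $A$, and this is exactly the data singling out the Morse--Bott Reeb orbit $\gamma_{g,l}$ on the link $M$: the short orbit $\gamma_g$ over $\cY_g$ followed by $l$ principal orbits, where $l$ is fixed by requiring that the closed-up curve meet the zero section with the multiplicity dictated by $\CL\cdot A$. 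First I would apply Proposition \ref{prop:SFT} to rewrite $\lSFT(\gamma_{g,l})=\vdim_{\bR}\moduli_{\SFT}(D(\CL^{-1}),[u],\gamma_{g,l})$, having chosen $u$ with zero intersection number with the zero section as the proposition requires.

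Next comes the dimension bookkeeping, where the two worlds meet. On the SFT side, \eqref{eqn:SFT} gives $\vdim_{\bR}\moduli_{\SFT}=\mu^{[u]}_{\LCZ}(\gamma_{g,l})+\dim_{\bC}D(\CL^{-1})-3=\mu^{[u]}_{\LCZ}(\gamma_{g,l})+n-3$, using $\dim_{\bC}D(\CL^{-1})=\dim_{\bC}\cY+1=n$. On the Gromov--Witten side, the virtual dimension formula recalled above together with the identity $\dim\cY-\age(g)-\dim\cY_g=\age(g^{-1})$ gives $\vdim_{\bC}\moduli^\bullet_{0,1}(\cY,A,g)=-K_\cY\cdot A+\age(g^{-1})-2$. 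The target identity I want is therefore
\begin{equation*}
\vdim_{\bR}\moduli_{\SFT}(D(\CL^{-1}),[u],\gamma_{g,l})=2\,\vdim_{\bC}\moduli^\bullet_{0,1}(\cY,A,g)+2,
\end{equation*}
equivalently $\mu^{[u]}_{\LCZ}(\gamma_{g,l})=2(-K_\cY\cdot A)+2\age(g^{-1})+1-n$. Granting this, $\frac12\lSFT(\gamma_{g,l})+1=\vdim_{\bC}\moduli^\bullet_{0,1}(\cY,A,g)+2$, and feeding the left-hand side into \eqref{eqn:mld_SFT} finishes the proof.

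The heart of the argument, and the main obstacle, is establishing this index identity. Its conceptual source is a degeneration of the SFT picture: a holomorphic plane in the completion of $D(\CL^{-1})$ asymptotic to $\gamma_{g,l}$ breaks, under neck-stretching along a sub-circle-bundle, into a level contained in the zero section $\cY_0\cong\cY$---an orbifold rational curve representing $A$ whose orbifold puncture lands in $\cY_g$, i.e.\ an element of $\moduli^\bullet_{0,1}(\cY,A,g)$---together with fiber holomorphic disks accounting for the $l$ principal orbits. Index additivity under SFT gluing then produces the displayed relation, the complex-to-real doubling reflecting that the base curve varies holomorphically and the extra $+2$ coming from the gluing and fiber-disk parameters. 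As the text preceding the statement stresses, one does not actually need the degeneration to occur: it is enough to verify the index identity directly. Concretely, I would compute $\mu^{[u]}_{\LCZ}(\gamma_{g,l})$ from the linearized Reeb flow on $M$ in the Morse--Bott family over $\cY_g$, using the trivialization furnished by $u$ (for which $\langle c_1(D(\CL^{-1})),[u]\rangle=0$ by \cite[Lemma 3.1]{LZ24}), and match the resulting weight contributions to $-K_\cY\cdot A=r\,\CL\cdot A$ and to $\age(g^{-1})$, exactly in the spirit of the normalization checks already carried out around \eqref{ineq-dg}. The delicate point is the bookkeeping of the $l$ principal rotations against the fractional rotation number $w_1(g)/m$ of $\gamma_g$, and the coordinated choice of $u$ and $l$ so that the intersection with the zero section vanishes; everything else reduces to the two closed-form dimension formulae already on the page.
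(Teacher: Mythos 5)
Your framework is the same as the paper's: combine the SFT formula \eqref{eqn:mld_SFT} with \Cref{prop:SFT}, and match the SFT virtual dimension against the Gromov--Witten one through the ``fiber disks plus zero-section curve'' picture. Your arithmetic reduction is also correct: the proposition does follow once one proves
\begin{equation*}
\vdim_{\bR}\moduli_{\SFT}(D(\CL^{-1}),[u],\gamma_{g,l})=2\,\vdim_{\bC}\moduli^{\bullet}_{0,1}(\cY,A,g)+2,
\end{equation*}
and the index identity $\mu^{[u]}_{\LCZ}(\gamma_{g,l})=2(-K_{\cY}\cdot A)+2\age(g^{-1})+1-n$ that you isolate is exactly equivalent to the relation $\lSFT(\gamma_{g,l-1})=\vdim_{\bR}\moduli^{\bullet}_{0,1}(\cY,[u],g)+2$ that the paper establishes.

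The genuine gap is that you never prove this identity; you explicitly defer it (``I would compute $\mu_{\LCZ}$\dots and match the resulting weight contributions''). That identity \emph{is} the proposition --- everything surrounding it is two lines of arithmetic --- and verifying it is precisely what the paper's proof consists of. Concretely, the paper replaces your proposed direct Conley--Zehnder computation by three explicit virtual-dimension computations that your sketch only names: (i) the SFT index of the (branched covers of the) fiber disks, $\vdim_{\bR}\moduli_{\SFT,1}(D(\CL^{-1}),[v],\gamma_{g,l-1},g)=2l-2$, computed in the local model $\bC\times\bC^{n-1}/\frac{1}{|G|}(1,b_2,\dots,b_n)$; (ii) the identification of the nodal configurations $v\#u$ as a virtual-codimension-$2$ stratum expressed as a fiber product over the evaluation maps, which is what turns index additivity into the precise correction terms; and (iii) the Fredholm index of the Cauchy--Riemann operator in the normal direction of $\cY_0\subset D(\CL^{-1})$, namely $2+2\la c_1(\CL^{-1}),[u]\ra-2k/|G|$, which relates $\moduli^{\bullet}_{0,1}(D(\CL^{-1}),[u],g)$ to $\moduli^{\bullet}_{0,1}(\cY,[u],g)$. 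Two further points you flag as ``delicate'' but leave unresolved are resolved in the paper by these computations: the orbit multiplicity is pinned down by requiring $-k/|G|+\la c_1(\CL^{-1}),A\ra$ to be the negative integer $-l$, and it is the \emph{glued} capping $v\#u$ (not an a priori chosen disk) that has zero intersection with $\cY_0$, which is what legitimizes the appeal to \Cref{prop:SFT}. Until either this decomposition argument or your direct Morse--Bott index computation is actually carried out, the proposal is a correct reduction plus a plan, not a proof.
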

\begin{proof}
    We start with the seemingly simpler case where $g=1$ represents the identity component $\cY$ of $I\cY$, i.e.\ the marked point on $\bP^1$ is not an orbifold point. Assume $u \in \moduli^{\bullet}_{0,1}(D(\CL^{-1}),A,1)$. Here we treat $u$ as irreducible, the nodal case is the same. Then $u^*\CL$ is an ample bundle over $\bP^1$. We write $k=\la c_1(\CL),A\ra>0$, let $v$ be the fiber disk in $D(\CL^{-1})$ over a smooth point in $\cY$ and $v^k$ be the natural $k$th branched cover of $v$. Then we have $\vdim_{\bR} \moduli_{\SFT}(D(\CL^{-1},[v^k], \gamma^k_{1})=2k-2$. This follows from that $\mu_{LCZ}^{v^k}(\gamma^k_{1})=2k-\dim_{\bC}\cY$ as the linearized flow of $\gamma^k_{1}$ using the induced trivialization from $v^k$ is rotation by $k$ rounds in the fiber direction and identity on the complement. Then the claim follows from \eqref{eqn:SFT}. Since $v^k\#u$ has intersection number zero with $\cY_0$, by \Cref{prop:SFT}, we have 
    \begin{eqnarray}
    \lSFT(\gamma^k_\Id) &= &\vdim_{\bR}\moduli_{\SFT}(D(\CL^{-1}), v^k\#u, \gamma^k_{1}) \nonumber \\
    &=& \vdim_{\bR}\moduli_{\SFT}(D(\CL^{-1}), [v^k], \gamma^k_{1})+4+\vdim_{\bR}\moduli^{\bullet}_{0,1}(D(\CL^{-1}), [u], 1). \label{eqn:fiberproduct}
    \end{eqnarray}
    To see the last equality, the nodal curve $v^k\cup u$ is contained in $\moduli_{\SFT}(D(\CL^{-1}), v^k\#u, \gamma^k_{1})$, those nodal curves form a strata of $\moduli_{\SFT}(D(\CL^{-1}), v^k\#u, \gamma^k_{1})$ of virtual codimension $2$. On the other hand, those nodal curves form the fiber product $\moduli_{\SFT,1}(D(\CL^{-1}), [v^k], \gamma^k_{1})\times_{D(\CL^{-1})}\moduli_{0,1}(D(\CL^{-1}), [u], 1)$, where $\moduli_{\SFT,1}(D(\CL^{-1}), [v^k], \gamma^k_{1})$ is the SFT curves as in $\moduli_{\SFT}(D(\CL^{-1}), [v^k], \gamma^k_{1})$ but with one marked point and the fiber product is taken over the evaluation maps on the marked points. Then we have $\vdim_{\bR} \moduli_{\SFT,1}(D(\CL^{-1}), [v^k], \gamma^k_{1})\times_{D(\CL^{-1})}\moduli_{0,1}(D(\CL^{-1}), [u], 1)$ is
    \begin{eqnarray*}
         & & \vdim_{\bR} \moduli_{\SFT,1}(D(\CL^{-1}), [v^k], \gamma^k_{1})+\vdim_{\bR} \moduli_{0,1}(D(\CL^{-1}), [u], 1)-\dim_{\bR}D(\CL^{-1})\\
         & = & \vdim_{\bR} \moduli_{\SFT}(D(\CL^{-1}), [v^k], \gamma^k_{1})+2+\vdim_{\bR}\moduli^{\bullet}_{0,1}(D(\CL^{-1}), [u], 1).
    \end{eqnarray*}
    Therefore \eqref{eqn:fiberproduct} holds. Note that
    $$\vdim_{\bR}\moduli_{0,1}^{\bullet}(D(\CL^{-1}),[u],1) = 2+\vdim_{\bR}\moduli_{0,1}^*(\cY,[u],1) - 2\la c_1(\CL),A \ra -2.$$
    The first $2$ is from that the point constraint in $\moduli_{0,1}^{\bullet}(D(\CL^{-1}),[u],1)$ is of real codimension $\dim_{\bR}D(\CL^{-1})$, while the point constraint in $\moduli_{0,1}^{\bullet}(\cY,[u],1)$ is of real codimension $\dim_{\bR}\cY$. The $- 2\la c_1(\CL),A \ra -2$ comes from the index of Cauchy-Riemann operator in the normal direction of $\cY_0\subset D(\CL^{-1})$. Combining them together, we have
    $$\lSFT(\gamma^k_{1}) = \vdim_{\bR}\moduli_{0,1}^{\bullet}(\cY,[u],1)+2$$
     In general, assume $g$ is represented by $G\in S$ and multiplicity $k\ne 0 \in G\backslash G^-\subset \bZ/|G|$. The orbifold marked point on $\bP^1$ then has order $|G|/\gcd(k,|G|)$. Then $u^*(\CL^{-1})$ has a meromorphic section without zero and an order $-\la c_1(\CL^{-1}),A\ra$ pole at the marked point. Then $-k/|G|+\la c_1(\CL^{-1}),A\ra$ is a negative integer $-l$. We consider the Reeb orbit $\gamma_{g,l-1}$. Let $v$ be the fiber orbifold disk over a generic point in $M^G/S^1$, with boundary $\gamma_{g,l-1}$ and an orbifold marked point asymptotic to $\cY_g\subset ID(\CL^{-1})$, we use $\vdim_{\bR}\moduli_{\SFT,1}(D(\CL^{-1}),[v], \gamma_{g,l-1},g)$ to denote the moduli space of such $v$, then 
     $$\vdim_{\bR}\moduli_{\SFT,1}(D(\CL^{-1}),[v], \gamma_{g,l-1},g)=2l-2.$$
     as this can be computed in the local model $\bC\times \bC^{n-1}/\frac{1}{|G|}(1,b_2,\ldots,b_n)$.
     Since we have 
     \begin{eqnarray*}
         \vdim_{\bR}\moduli_{0,1}^{\bullet}(D(\CL^{-1}),[u], g) & = & \vdim_{\bR}\moduli_{0,1}^{\bullet}(\cY,[u], g)+2+2\la c_1(\CL^{-1}),[u]\ra-2k/|G|\\
         & = & \vdim_{\bR}\moduli_{0,1}^{\bullet}(\cY,[u], g)+2-2l.
     \end{eqnarray*}
     where $2+2\la c_1(\CL^{-1}),[u]\ra-2k/|G|$ is the Fredholm index of the Cauchy-Riemann operator in the normal direction. 
     Since $v\#u$ has zero intersection with $\cY_0$, by \Cref{prop:SFT}, we have
     \begin{eqnarray*}
     \lSFT(\gamma_{g,l-1}) & = &\vdim_{\bR}\moduli_{\SFT}(D(\CL^{-1}), [v\# u],\gamma_{g,l-1})\\
     & = & 2+ \vdim_{\bR}\moduli_{\SFT,1}(D(\CL^{-1}), [v],\gamma_{g,l-1})\times_{\cY_g} \moduli_{0,1}(D(\CL^{-1}),[u],g)\\
     & = &  2+ \vdim_{\bR}\moduli_{\SFT,1}(D(\CL^{-1}), [v],\gamma_{g,l-1})+\vdim \moduli_{0,1}(D(\CL^{-1}),[u],g) -\dim_{\bR}\cY_g\\
     & = &  2+ \vdim_{\bR}\moduli_{\SFT,1}(D(\CL^{-1}), [v],\gamma_{g,l-1})+\vdim_{\bR}\moduli_{0,1}^{\bullet}(D(\CL^{-1}),[u],g).
     \end{eqnarray*}
     Then $\lSFT(\gamma_{g,l-1})=2+\vdim_{\bR}\moduli_{0,1}^{\bullet}(\cY,[u],g)$. Since $2\mld(X,o)\le \lSFT(\gamma)+2$, we have 
     $$\mld(X,o) \le \vdim_{\bC} \moduli_{0,1}^{\bullet}(\cY,[u], g)+2.$$
     
\end{proof}

\bibliographystyle{alpha} 
\bibliography{ref}

\Addresses
\end{document}